
\documentclass[11 pt]{amsart}
\usepackage{amsfonts}

\usepackage{amssymb, amsmath}
\usepackage{natbib}
\usepackage{lscape}
\usepackage{dsfont}
\usepackage{mathrsfs}
\usepackage{bbm}
\usepackage[pdftex,plainpages=false,colorlinks,hyperindex,bookmarksopen,linkcolor=red,citecolor=blue,urlcolor=blue]{hyperref}
\usepackage{graphicx}
\usepackage{subcaption}

\setcounter{MaxMatrixCols}{10}

\DeclareMathAlphabet{\mathpzc}{OT1}{pzc}{m}{it}
\bibpunct{[}{]}{;}{n}{,}{,}
\newtheorem{te}{Theorem}[section]
\newtheorem{defin}[te]{Definition}
\newtheorem{os}[te]{Remark}
\newtheorem{prop}[te]{Proposition}

\newtheorem{ex}[te]{Example}
\numberwithin{equation}{section}
\allowdisplaybreaks

\begin{document}

\large

\title[]{ Some families of random fields  related to multiparameter L\'evy
processes }
\keywords{Multiparameter L\'evy processes, subordination of random fields, 
fractional operators, semi-Markov processes, anomalous diffusion\\
\\
* Dipartimento di Scienze di Base Applicate all'Ingegneria - Sapienza Università di Roma
\\
** Dipartimento di Scienze Statistiche - Sapienza Università di Roma
\\
Corresponding author: Costantino Ricciuti (costantino.ricciuti@uniroma1.it)
}
\date{\today }
\subjclass[2010]{ 60G51 60G60, 60K50}

 \author[]{Francesco Iafrate *   }
 \author[]{Costantino Ricciuti**}

\begin{abstract}
Let $\mathbb{R}^N_+= [0,\infty)^N$. We here consider a class of random fields $(X_t)_{t\in \mathbb{R}^N_+}$ which are known as Multiparameter L\'evy processes. Related multiparameter semigroups of operators  and their generators are represented as pseudo-differential operators. We also consider the composition of $(X_t)_{t\in \mathbb{R}^N_+}$ by means of the so-called subordinator fields and we provide a Phillips formula. We finally study the composition of $(X_t)_{t\in \mathbb{R}^N_+}$ by means of the so-called inverse random fields, which gives rise to interesting long range dependence properties. As a byproduct of our analysis, we study a model of anomalous diffusion in an anisotropic medium which extends the one treated in \cite{macci}.

\end{abstract}

\maketitle

{}

{}

\section{Introduction}
In this paper we consider Multiparameter L\'evy processes $(X_t)_{t\in \mathbb{R}^N_+}$ in the sense of \cite{sato, pedersen 1, pedersen 2, pedersen 3}.  The reason they are called in this way is that they enjoy, in some sense,  independence and stationarity of increments.  Independence  of increments is meant in the following way. First a partial ordering on $\mathbb{R}^N_+$ is established, such that $a\preceq b$ in $\mathbb{R}^N_+$ if $a_i\leq b_i$ for each $i=1, \dots N$. Then it is assumed that, for any choice of ordered points $t^{(1)}, t^{(2)}, \dots, t^{(k)}$ in $\mathbb{R}^N_+$, we have that $X_{t^{(j+1)}}-X_{t^{(j)}}$, $j=1, \dots, k-1$, is a set of independent random variables. On the other hand, stationarity of increments means that  $X_{t+\tau}-X_t$ has the same distribution of $X_\tau$  for all $t, \tau \in \mathbb{R}^N_+$.

Such processes are not to be confused with other extensions of L\'evy processes where the parameter is multidimensional. Among them, we recall a class of processes, including the Brownian sheet and  the Poisson sheet, which have a different definition from ours, because in that case independence of increments is understood in another way (consult e.g. \cite{adler,dalang, khosh 2}).


Multiparameter L\'evy processes are of interest in Analysis since they furnish a stochastic solution to some systems of differential equations, as will be recalled in  section \ref{sezione 2}. Roughly speaking,   if the vector $G= (G_1, G_2, \dots, G_N)$ is the generator of a Multiparameter L\'evy process $(X_t)_{t\in \mathbb{R}^N_+}$ , then, provided that $u$ belongs to suitable function spaces, the function $\mathbb{E}u (x+X_t)$ ($\mathbb{E}$ denoting the expectation)   solves the system 
\begin{align}
\frac{\partial}{\partial t_k} h(x,t)= G_k \, h(x,t) \qquad h(x,0)= u(x) \qquad k=1, \dots, N \label{sistema 1}
\end{align}
where $t=(t_1, \dots, t_N)$.
Of course, for one parameter L\'evy processes, we have a single differential equation, as stated by the well known Feller theory of one parameter Markov processes and semigroups.

The idea of subordination for Multiparameter L\'evy processes is presented in \cite{sato, pedersen 1, pedersen 2, pedersen 3} (for the classical theory of subordination of one-parameter L\'evy processes see e.g. [\cite{sato book}, chapter 6]). The construction is as follows. Let $(X_t)_{t\in \mathbb{R}_+ ^N}$ be a Multiparameter L\'evy process and let $(H_t)_{t\in \mathbb{R}_+ ^M}$ be a  subordinator field, i.e. a Multiparameter L\'evy process with values in $\mathbb{R}_+^N$, such that it has non decreasing paths in the sense of the partial ordering (i.e. $t_1 \preceq t_2$ in $\mathbb{R}^M_+$ implies $H_{t_1}\preceq H_{t_2}$ in $\mathbb{R}^N_+$) . The subordinated field  is defined by $(X_{H_t})_{t\in  \mathbb{R}_+ ^M}$ and it is again a Multiparameter L\'evy process.

 One of the main results of this paper is to provide a formula for the generator of the subordinated field. Indeed  we find an  extension of the Phillips theorem to the multi-parameter case, by involving the so-called multi-dimensional Bernstein functions. This gives rise to interesting systems of type \ref{sistema 1}. In those systems, the operator on the right side may possibly be pseudo-differential.
For example, when the subordinator field is stable,  such a system could be interesting for those studying fractional equations, since the operator on the right side involves the fractional Laplacian and the so-called fractional gradient; we recall that the fractional gradient is a generalization of the  fractional Laplacian to the case where the jumps are not isotropically distributed (see e.g. [\cite{macci}, Example 2.2] and the references therein).

The basic case of subordinator field is the one with $M=1$. In this case we have a one-parameter process $H_t= (H_1 (t), \dots, H_N(t))$ which the authors in \cite{sato} call \textit{multivariate subordinator}. This is nothing more than a one-parameter L\'evy process with values in $\mathbb{R}^N_+$, where all the components $t\to H_j(t)$ are non-decreasing (namely, each $H_j$ is a subordinator). Using a multivariate subordinator, subordination of a Multiparameter L\'evy process gives a one-parameter L\'evy process.

In the second part of the paper, by considering a multivariate subordinator  $(H_1(t), \dots H_N(t))$, we will construct  a new random field 
\begin{align}\mathcal{L}_t= \bigl (L_1(t_1), \dots, L_N(t_N) \bigr ) \qquad t=(t_1, \dots, t_N)\label{inverse random field 1} \end{align}
where  $L_j$ is the inverse, also said the hitting time, of the subordinator $H_j$, i.e.
$$ L_j(t_j)= \inf \{x>0: H_j(x)>t_j \}.$$
We will call \ref{inverse random field 1} \textit{inverse random field}.
 Now, let $(X_t)_{t\in \mathbb{R}^N_+}$ be a Multiparameter L\'evy process with values in $\mathbb{R}^d$, which is assumed to be independent of  (\ref{inverse random field 1}) . We are interested in the subordinated random field $(Z_t)_{t\in \mathbb{R}^N_+}$ defined by
\begin{align}
Z_t=X_{\mathcal{L}_t} \label{campo subordinato con gli inversi 1}
\end{align}
Of course,  \ref{inverse random field 1} and  \ref{campo subordinato con gli inversi 1}    are not Multiparameter L\'evy processes because they enjoy neither independence nor stationarity of increments with respect to the partial ordering on $\mathbb{R}^N_+$.  However, they may be useful in applications in order to model spatial data exhibiting various correlation structures which cannot fall in the framework of Multiparameter L\'evy or Markov processes.

Our topic has been inspired by some existing literature. First of all, there are many papers  (see e.g. \cite{becker,
kolokoltsov,
meer nane,
meer spa,
meer jap,
meer straka,
meer libro,
meer toaldo,
straka 2})
  concerning semi-Markov processes of the form
\begin{align}
Z(t)= X(L(t)) \qquad t\geq 0 \label{semi Markov iniziale}
\end{align}
where $X$ is a (one parameter) L\'evy process in $\mathbb{R}^d$ and $L$ is the inverse  of a subordinator $H$ independent of $X$,  i.e.
$$ L(t)= \inf \{x>0: H(x)>t\}. $$
Processes of  type \ref{semi Markov iniziale} have an important role in statistical physics, since they model continuous time random walk scaling limits and anomalous diffusions. Moreover, it is known that \ref{semi Markov iniziale} is not Markovian and its density $p(x,t)$ is governed by an equation which is non local in the time variable:
\begin{align}\mathcal{D}_t p(x,t)  - \overline{\nu} (t) p(x,0)    = G^*\, p(x,t). \end{align}
In the above equation, $G^*$ is the dual to the generator of $X$ and the operator $\mathcal{D}_t$ is the so-called generalized fractional  derivative (in the sense of Marchaud), defined by
\begin{align}
\mathcal{D}_t h(t):= \int _0^\infty \bigl ( h(t)- h(t-\tau) \bigr ) \nu (d\tau), \qquad t>0 \label{prima equazione}
\end{align}
where $\nu$ is the L\'evy measure of $H$ and $\overline{\nu}(t):= \int _t^\infty \nu (dx)$ is the tail of the L\'evy measure.

The main results regarding the random fields of type \ref{campo subordinato con gli inversi 1} will be reported   in Section \ref{paragrafo sugli inversi}; we will show that they have interesting correlation structures and that they are governed by particular integro-differential  equations. Such equations are non local in the $t_1, \dots, t_N$ variables and generalize equation (\ref{prima equazione}) holding in the one-parameter case. 

We also recall that the first idea of inverse random field appeared in [\cite{macci}, Sect. 3] where the authors proposed a model of multivariate time change.

Another source of inspiration is the paper \cite{Leonenko}, even if it does not exactly fit into our context. Here the authors considered a Poisson sheet $N(t_1, t_2)$, which is not a Multiparameter L\'evy process in the sense of this paper, and studied the composition 
$$Z(t_1, t_2)=N(L_1(t_1),L_2( t_2)),$$
 where $L_1$ and $L_2$ are  two independent  inverse stable subordinators, of index $\alpha _1$ and $\alpha _2$ respectively; the resulting random field showed interesting long range dependence properties.



\section{Basic notions and some preliminary results} \label{sezione 2}
We introduce the partial ordering on the set  $\mathbb{R}_+^N= [0,\infty)^N$: the point $a=(a_1, \dots, a_N)$ precedes the point  $b=(b_1, \dots, b_N)$, say  $a \preceq b$, if and only if $a_j \leq b_j$ for each $j=1, \dots, N$. 

A sequence $\{x_i\}_{i=1}^\infty$ in $\mathbb{R}_+^N$ is said to be increasing if $x_i \preceq x_{i+1}$ for each $i$; it  is said to be decreasing if  $x_{i+1} \preceq x_i$ for each $i$.

Consider a  function  $f: \mathbb{R}_+^N\to \mathbb{R}^d$. We say that $f$ is right continuous at $x \in \mathbb{R}_+^N$ if, for any decreasing sequence  $x_i\to x$ we have $f(x_i) \to f(x)$.

We say that  $f: \mathbb{R}_+^N\to \mathbb{R}^d$ has left limits at $x \in \mathbb{R}_+^N/\{0\}$ if, for any increasing sequence  $x_i\to x$, the limit of $f(x_i)$ exists; such a limit may depend on the choice of the sequence $x_i$.

Moreover, $f$ is said to be cadlag if it is right continuous at each $x \in \mathbb{R}_+^N$ and has left limits at each  $x \in \mathbb{R}_+^N/\{0\}$.

\subsection{Multiparameter L\'evy processes}

We here recall the notion of Multiparameter L\'evy process in the sense of \cite{sato, pedersen 1, pedersen 2, pedersen 3}. We also refer to \cite{khosh} as a standard reference on Multiparameter Markov processes. 

The parameters set is  here assumed to be $\mathbb{R}_+^N$.  An analogous (but more general) definition holds if  the parameter set is  any cone contained in  $\mathbb{R}^N$, but this generalization is not essential for the aim of this paper.

\begin{defin} \label{Multiparameter Levy processes}
A random field $(X_t)_{t\in \mathbb{R}_+^N}$, with values in $\mathbb{R}^d$, is said to be a Multiparameter L\'evy process if
\begin{enumerate}
\item  $X_0=0$ a.s.
\item it has independent increments with respect to the partial ordering on  $\mathbb{R}_+^N$, i.e. for any choice of $ 0 =t^{(0)} \preceq t^{(1)} \preceq t^{(2)} \dots \preceq t^{(k)}$, the random variables $X_{t^{(j)}}-X_{t^{(j-1)}}$, $j=1,\dots, k$, are independent.
\item it has stationary increments, i.e. $X_{t+\tau}-X_t\overset{d}{=}X_\tau$ for each $t,\tau \in \mathbb{R}^N_+$
\item it is cadlag a.s.
\item it is continuous in probability, namely, for any sequence $t^{(i)} \in \mathbb{R}_+^N$ such that $t^{(i)}\to t$, it holds that $X_{t^{(i)}}$ converges to $X_t$ in probability. 
\end{enumerate}
\end{defin}
If $(1),(2), (3), (5)$ hold, then $(X_t)_{t\in \mathbb{R}_+^N}$ is said to be a Multiparameter L\'evy process in law.

We report some examples of Multiparameter L\'evy processes, which are constructed from one-parameter ones.

\begin{ex} \label{processi additivi}If  $(X^{(1)}_{t_1})_{t_1 \in \mathbb{R}_+}, \dots, (X^{(N)}_{t_N})_{t_N \in \mathbb{R}_+}$ are $N$ independent L\'evy processes on $\mathbb{R}^d$, with laws $\nu ^{(1)} _{t_1}, \dots, \nu ^{(N)} _{t_N}$, then 
$$
X_t:= X^{(1)}_{t_1}+X^{(2)}_{t_2}+ \dots +   X^{(N)}_{t_N}     \qquad t=(t_1, t_2, \dots, t_N)
$$
is a N-parameter L\'evy process on $\mathbb{R}^d$, which is usually called additive L\'evy process (see e.g. \cite{khosh1} and [\cite{khosh}, pp. 405]).

Here $X_t$ has law
$$ \mu _t = \nu ^{(1)} _{t_1} *\dots* \nu ^{(N)} _{t_N}
$$
where $*$ denotes the convolution.
Examples of the sample paths are shown in \autoref{fig:add-field} and \autoref{fig:vec-field}.

\begin{figure}[h]
	\begin{subfigure}{.5\textwidth}
		\includegraphics[width=\textwidth]{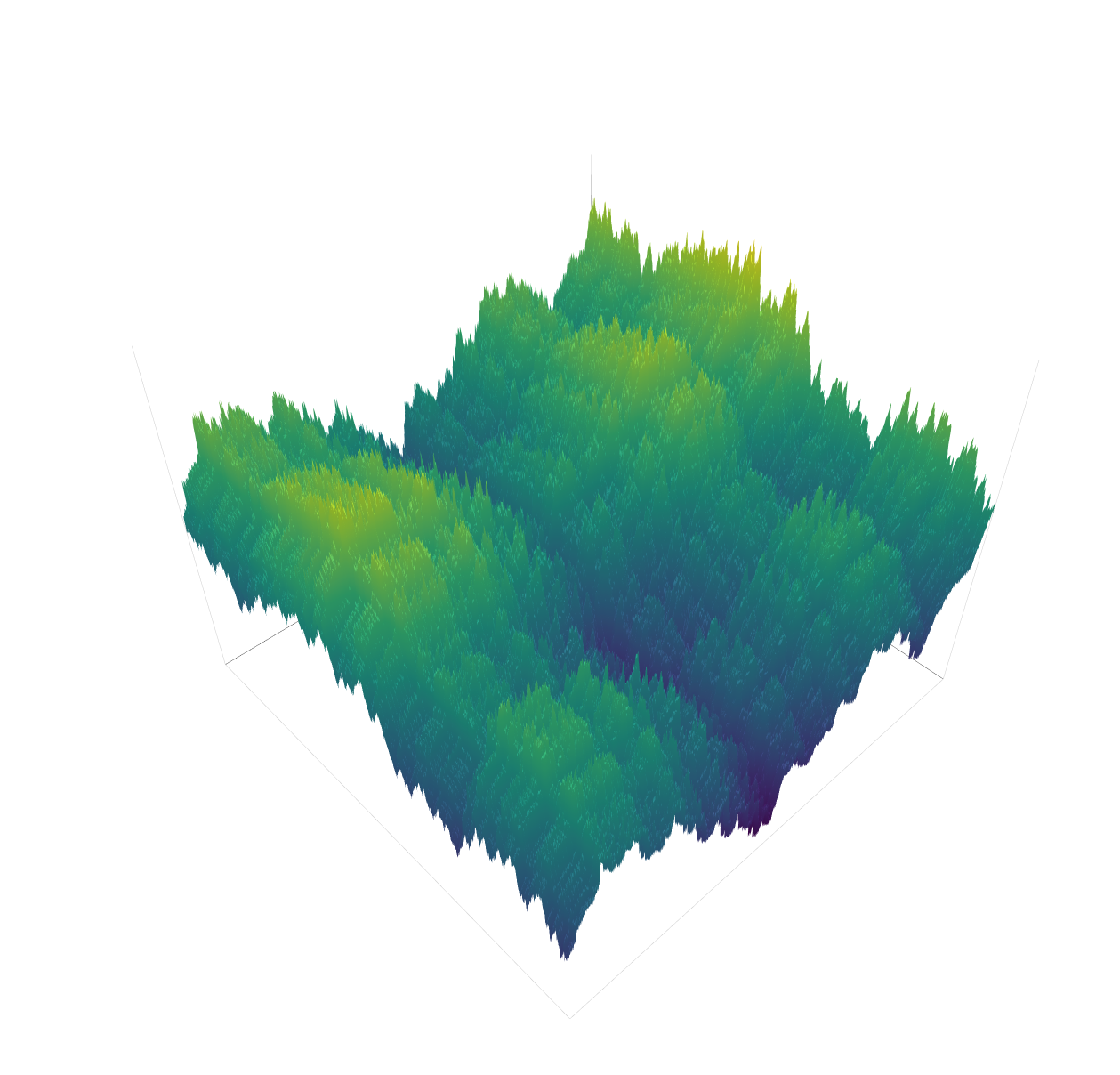}
		\caption{Brownian additive field}		
	\end{subfigure}%
	\begin{subfigure}{.5\textwidth}
		\includegraphics[width=\textwidth]{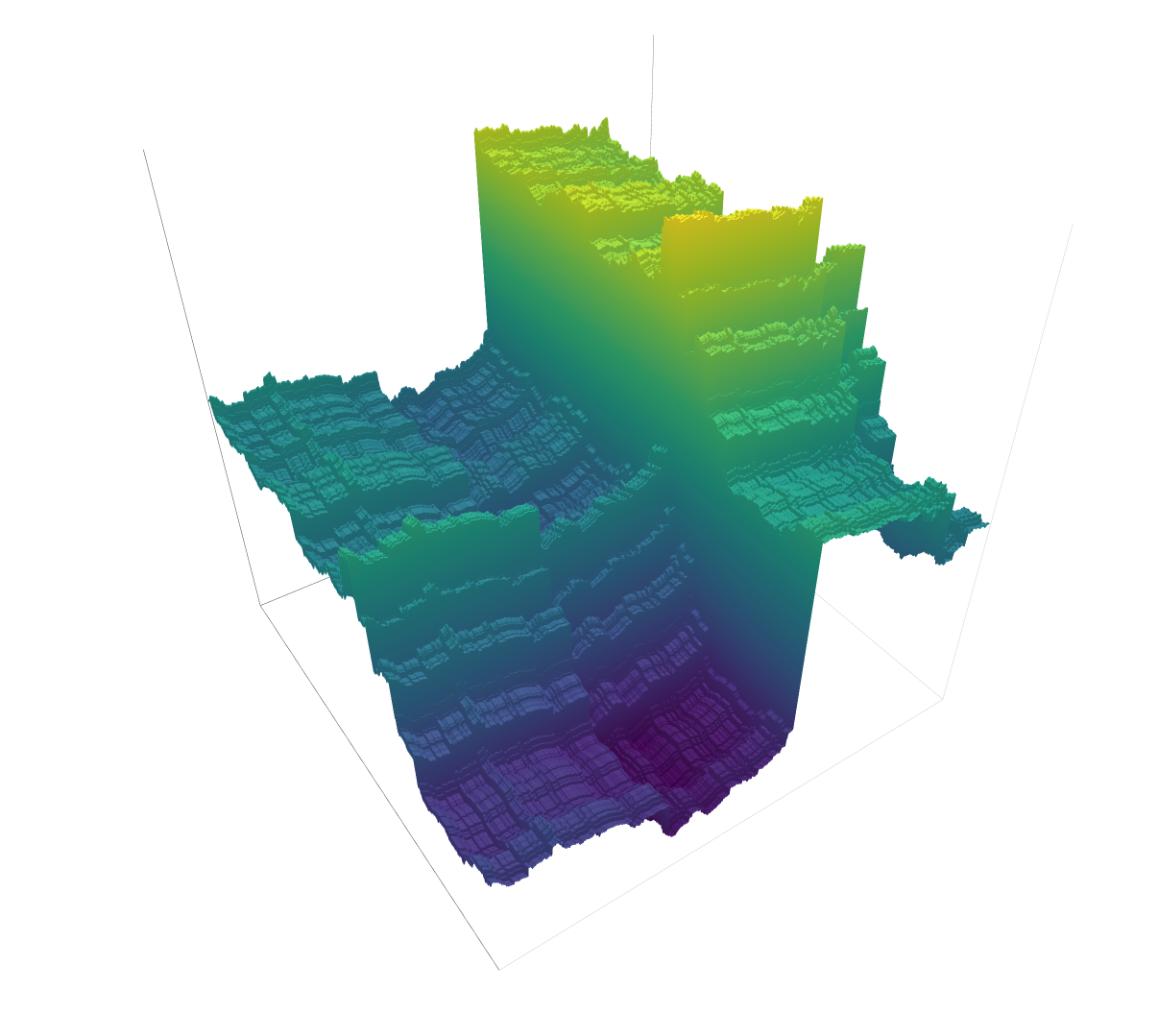}
		\caption{Stable additive field}	
	\end{subfigure}
\caption{Sample paths of additive Lévy fields, as in Example 2.2}
\label{fig:add-field}
\end{figure}

\begin{figure}[h]	
	\begin{subfigure}{.5\textwidth}
		\centering
		\includegraphics[width=.9\textwidth]{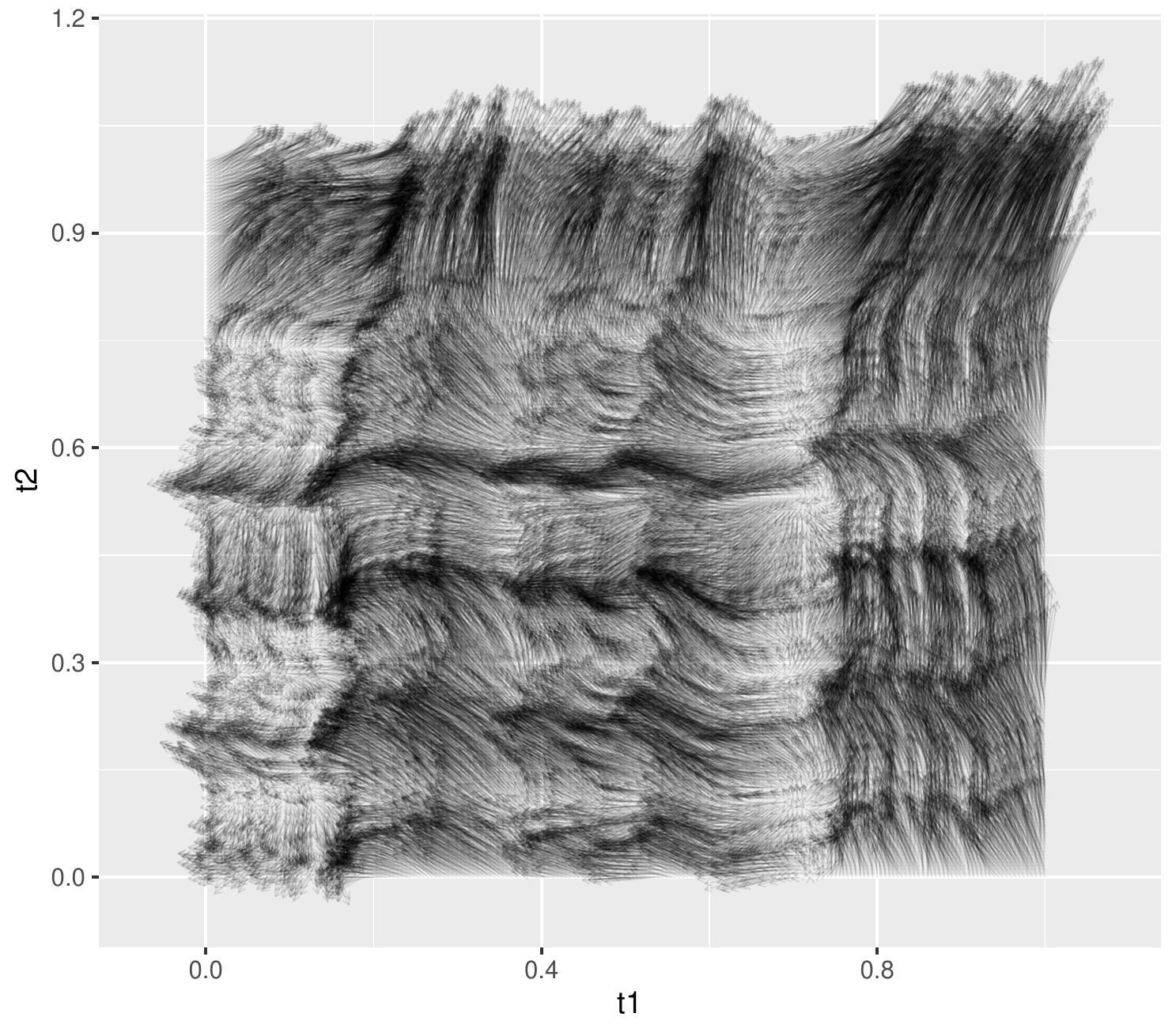}
		\caption{Brownian vector field}		
	\end{subfigure}%
	\begin{subfigure}{.5\textwidth}
		\centering
		\includegraphics[width=.9\textwidth]{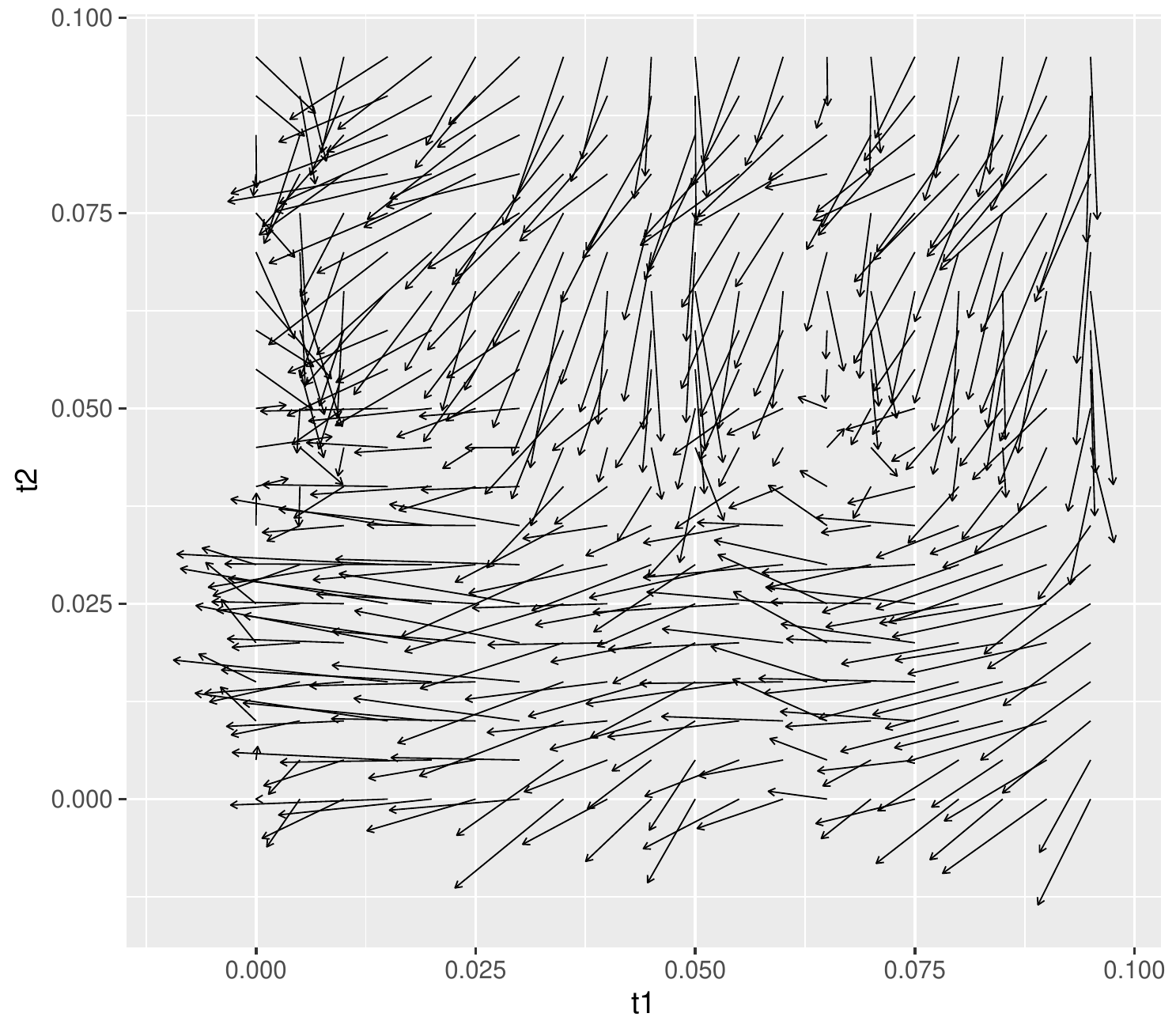}
		\caption{Zoom of the previous figure on $[0,1]^2$}	
	\end{subfigure}
	\caption{Sample path of a $ \mathbb R^2 $-valued biparameter additive field (i.e. $ d = N = 2 $).}
	\label{fig:vec-field}
\end{figure}

\end{ex}

\begin{ex}
Let $(X^{(1)}_{t_1})_{t_1 \in \mathbb{R}_+}, \dots (X^{(N)}_{t_N})_{t_N \in \mathbb{R}_+}$ be independent $\mathbb{R}$-valued L\'evy processes with laws    $\nu ^{(1)}_{t_1}, \dots, \nu ^{(N)} _{t_N}$. Then 
$$ X_t = \bigl ( X^{(1)}_{t_1},   X^{(2)}_{t_2},    \dots, X^{(N)}_{t_N} \bigr )   \qquad t=(t_1, t_2, \dots, t_N)$$
is a $\mathbb{R}^N$ valued L\'evy process, which can be called product L\'evy process (in the language of [\cite{khosh}, pag. 407]). Clearly, this is a particular case of  Example \ref{processi additivi} because
$$ X_t= X^{(1)}_{t_1}e_1+ X^{(2)}_{t_2}e_2 + \dots + X^{(N)}_{t_N}e_N $$
where $\{e_1, \dots, e_N\}$ denotes the canonical basis of $\mathbb{R}^N$.

Here $X_t$ has law
$$
\mu _t = \nu ^{(1)} _{t_1} \otimes    \nu ^{(2)} _{t_2} \dots \otimes \nu ^{(N)}_{t_N}
$$
where $\otimes$ denotes the product of measures.

\end{ex}

\begin{ex}
Let $(V_t)_{t\in \mathbb{\mathbb{R}_+}}$ be a L\'evy process in $\mathbb{R}^d$. Then $V_{c_1t_1+\dots+ c_Nt_N}$ is a multi-parameter L\'evy process for any choice of $ (c_1, \dots, c_N)\in \mathbb{R}^N_+$.
\end{ex}

\begin{os}  What we have presented is not the only way to extend the notion of independence of increments to the multiparameter case. A very common approach is to define independence of increments over disjoint rectangles
(see \cite{adler} and \cite{dalang}).
This gives rise to a class of random fields, known as Levy sheets (e.g. the Poisson sheet or the Brownian sheet). 
\end{os}


In the following, $\delta _0$ will denote the probability measure concentrated at the origin. Moreover, $\{e_1, \dots, e_N\}$ will denote the canonical basis of $\mathbb{R}^N$. 

\begin{defin} \label{semigruppo di misure}
A family $(\mu _t)_{t\in \mathbb{R}^N _+}$ of probability measures on $\mathbb{R}^d$ is said to be a $\mathbb{R}^N_+$-parameter convolution semigroup if 

i)  $\mu _{t+\tau}= \mu _t * \mu _{\tau}$, for all $t, \tau \in  \mathbb{R}^N_+$

ii)  $ \mu _t \to \delta _0 $ as $ t\to 0$
\end{defin}
By Def. \ref{semigruppo di misure} it follows that $\mu _t$ is infinitely divisible for each $t$. 

The above notion of multi-parameter convolution semigroup  is related to Multiparameter L\'evy processes, as shown in the following Proposition.

We preliminarily observe that, since $X_t$ is a Multiparameter L\'evy process, where $t=(t_1, \dots, t_N)$, it immediately follows that, for each $j=1, \dots, N$, the process  $(X_{t_je_j})_{t_j \in \mathbb{R}_+}$ is a classical one-parameter L\'evy process. In other words, if  $(\mu _t)_{t\in \mathbb{R}^N_+}$  is a multi-parameter convolution semigroup, then $(\mu _{t_je_j})_{t_j\in \mathbb{R}_+}$ is a one-parameter convolution semigroup which is the law of $X_{t_je_j}$.

\begin{prop} \label{multiparameter}
Let $(X_t)_{t\in \mathbb{R}_+^N}$ be a Multiparameter L\'evy process on $\mathbb{R}^d$ and let $\mu _t$ be the law of the random variable $X_t$. Then

\textit{i)} The family $(\mu _{t}) _{t  \in \mathbb{R}_+^N  }$ is a $\mathbb{R}^N_+$-parameter convolution semigroup of probability measures.

\textit{ii)}
There exist  independent random vectors $Y^{(j)}_{t_j}$, $j=1, \dots, N$,  with $ Y^{(j)}_{t_j} \overset{d}{=} X_{t_je_j}$, such that
\begin{align*}
X_t\overset{d}{=} Y^{(1)}_{t_1} + \dots Y^{(N)}_{t_N} \qquad t=(t_1, \dots, t_N)
\end{align*}

\end{prop}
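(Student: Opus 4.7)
The plan is to derive both statements directly from the axioms (1)--(5) of Definition \ref{Multiparameter Levy processes} by selecting the right chains of ordered points in $\mathbb{R}_+^N$ and invoking independence and stationarity of increments.

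For part (i), I would fix $t,\tau\in\mathbb{R}_+^N$ and use the chain $0\preceq t\preceq t+\tau$, which is valid because $\tau$ has nonnegative components. Axiom (2) applied to this chain makes $X_t-X_0=X_t$ and $X_{t+\tau}-X_t$ independent, while axiom (3) gives $X_{t+\tau}-X_t\overset{d}{=}X_\tau$. Writing $X_{t+\tau}=X_t+(X_{t+\tau}-X_t)$ and passing to laws yields $\mu_{t+\tau}=\mu_t*\mu_\tau$. For the convergence $\mu_t\to\delta_0$ as $t\to 0$, I would invoke continuity in probability (5) together with $X_0=0$ from (1): for any sequence $t^{(i)}\to 0$ we have $X_{t^{(i)}}\to 0$ in probability, which is equivalent to weak convergence of the laws $\mu_{t^{(i)}}$ to $\delta_0$.

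For part (ii), I would use a telescoping decomposition along the coordinate axes. Set $s^{(0)}=0$ and $s^{(j)}=t_1 e_1+\dots+t_j e_j$ for $j=1,\dots,N$, so that $s^{(N)}=t$ and $0=s^{(0)}\preceq s^{(1)}\preceq\dots\preceq s^{(N)}$. On the original probability space define
\[
Y^{(j)}_{t_j}:=X_{s^{(j)}}-X_{s^{(j-1)}},\qquad j=1,\dots,N.
\]
By independence of increments (2) along this chain, the vectors $Y^{(1)}_{t_1},\dots,Y^{(N)}_{t_N}$ are independent; and since $s^{(j)}-s^{(j-1)}=t_j e_j$, stationarity (3) gives $Y^{(j)}_{t_j}\overset{d}{=}X_{t_j e_j}$. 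The telescoping identity $X_t=X_{s^{(N)}}-X_{s^{(0)}}=\sum_{j=1}^N Y^{(j)}_{t_j}$ then holds almost surely, which is even stronger than the claimed distributional identity.

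There is no serious obstacle here; once the correct chains of ordered points are chosen, everything reduces to a mechanical application of axioms (2), (3) and (5). The only point requiring a bit of care is the identification of the limit $\mu_t\to\delta_0$ in Definition \ref{semigruppo di misure}(ii), where one must recall that convergence in probability to the constant $0$ is equivalent to weak convergence of the corresponding laws to $\delta_0$.
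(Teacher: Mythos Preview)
Your proof is correct and follows essentially the same route as the paper's: for (i) both arguments decompose $X_{t+\tau}$ as a sum of two increments along the ordered pair $0\preceq t\preceq t+\tau$ (the paper uses $0\preceq\tau\preceq t+\tau$, which is the same up to relabeling) and invoke stochastic continuity for $\mu_t\to\delta_0$; for (ii) both rely on the decomposition $t=\sum_j t_j e_j$, the only difference being that the paper deduces the convolution factorization $\mu_t=\mu_{t_1e_1}*\cdots*\mu_{t_Ne_N}$ directly from (i), whereas you construct the $Y^{(j)}_{t_j}$ explicitly as increments of $X$ along the chain $s^{(0)}\preceq\cdots\preceq s^{(N)}$, obtaining an almost-sure identity rather than merely a distributional one.
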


\begin{proof}
By writing
$$X_{t+\tau}= (X_{t+\tau}-X_\tau) +X_\tau \qquad \textrm{for all}\,\, t,\tau \in \mathbb{R}_+^N$$
we observe that  $X_{t+\tau}-X_\tau$ and $X_\tau$ are independent by the assumption of independence of increments along those sequences that are increasing with respect to the partial ordering. Moreover $X_{t+\tau}-X_\tau$ has the same distribution of $X_t$ by stationarity. 
Hence  $\mu _{t+\tau}=\mu_t*\mu _\tau$. 
Moreover, stochastic continuity of $(X_t)_{t\in \mathbb{R}^N_+}$ gives $\mu_t \to \delta _0$ as $t\to 0$, and thus \textit{i)} is proved. To prove \textit{ii)}, it is sufficient to write $t= t_1e_1+ \dots+t_N e_N$ and apply the semigroup property just  proved in point \textit{i)}, to have
$$\mu _t = \mu _{t_1e_1}*\dots * \mu _{t_N e_N} $$
and the proof is complete since $\mu _{t_je_j}$ is the law of $X_{t_je_j}$.
\end{proof}

We stress that Proposition \ref{multiparameter} is a statement about equality in law of random variables ($t$ is fixed), and not equality of processes.

We further observe that Proposition \ref{multiparameter} says that to each Multiparameter L\'evy process in law there corresponds a unique convolution semigroup of probabilty measures. But, unlike what happens for classical L\'evy processes (i.e. when $N=1$),  the converse is not true in general: a multiparameter convolution semigroup $(\mu _t)_{t\in \mathbb{R}^N_+}$  can be associated to different Multiparameter L\'evy processes in law, because $(\mu _t)_{t\in \mathbb{R}^N_+}$ does not completely determine all the finite-dimensional distributions. Indeed, only along $\mathbb{R}^N_+$-increasing sequences $0 \preceq \tau ^{(1)} \preceq \dots \preceq \tau ^{(k)}$,  the joint distribution of $(X_{\tau^{(1)}}, \dots, X_{\tau ^{(k)}})$ can be uniquely determined in terms of $\mu _t$  by using independence and stationarity of increments, but this is not possible if the points $\tau ^{(1)}, \dots, \tau ^{(k)} \in \mathbb{R}^N_+$ are not ordered (in the sense of the partial ordering).

\subsubsection{Characteristic function of Multiparameter L\'evy processes} \label{MMMM}

Consider the $Y^{(j)}_{t_j}$ involved in Proposition \ref{multiparameter}. By the L\'evy Khintchine formula, we have
\begin{align}
\mathbb{E}e^{i\xi \cdot Y^{(j)}_{t_j} }= \int _{\mathbb{R}^d} e^{i\xi \cdot y} \mu _{t_je_j}(dy)=      e^{t_j\psi _j(\xi)}     \qquad \xi \in \mathbb{R}^d,  \label{LLLL}
\end{align}
 the L\'evy exponent $\psi _j$ having the form
\begin{align}
\psi _j (\xi)= i \gamma _j\cdot \xi - \frac{1}{2} A_j\xi \cdot \xi + \int _{\mathbb{R}^d/\{0\}} (e^{i \xi \cdot z}-1 -i\xi \cdot z I_{[-1,1]} (z)) \nu _j(dz)  \label{levy khintchine}
\end{align}
where $\gamma _j \in \mathbb{R}^d$, $A_j$ is the Gaussian covariance matrix,  $\nu _j$ denotes the L\'evy measure and $\cdot$ denotes the scalar product. 
By the above considerations, we thus get the following statement.

\begin{prop}
Let $(X_t)_{t\in \mathbb{R}_+^N}$ be a Multiparameter L\'evy process with values in $\mathbb{R}^d$. Then $X_t$ has characteristic function
\begin{align}
\mathbb{E} e^{i\xi\cdot X_t }= e^{t_1\psi _1(\xi)+ \dots + t_N \psi _N(\xi)} = e^{t\cdot \Psi (\xi)} \qquad \xi \in \mathbb{R}^d \label{funzione caratteristica Levy multiparametrico}
\end{align}
where $t= (t_1, \dots, t_N)$,  the functions $\psi _j$ have been defined in \ref{levy khintchine}, and
\begin{align}
\Psi (\xi )= (\psi_1(\xi), \dots, \psi _N(\xi)). \label{simbolo Levy multiparametrico}
\end{align}

\end{prop}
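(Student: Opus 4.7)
The plan is to reduce the statement to the one-parameter L\'evy--Khintchine formula via the decomposition already established in Proposition \ref{multiparameter}. Since the proposition is essentially a corollary of that decomposition plus independence, the argument is short and there is no real obstacle.

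First I would invoke Proposition \ref{multiparameter} (ii), which gives independent random vectors $Y^{(1)}_{t_1}, \dots, Y^{(N)}_{t_N}$ with $Y^{(j)}_{t_j} \stackrel{d}{=} X_{t_j e_j}$ and
\begin{equation*}
X_t \stackrel{d}{=} Y^{(1)}_{t_1} + \cdots + Y^{(N)}_{t_N}.
\end{equation*}
Taking the characteristic function on both sides and using independence of the $Y^{(j)}_{t_j}$, one obtains
\begin{equation*}
\mathbb{E}\, e^{i \xi \cdot X_t} \;=\; \prod_{j=1}^{N} \mathbb{E}\, e^{i \xi \cdot Y^{(j)}_{t_j}}.
\end{equation*}

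Next I would recall, as already stated right before the proposition, that for fixed $j$ the one-parameter process $(X_{t_j e_j})_{t_j \in \mathbb{R}_+}$ is a classical L\'evy process on $\mathbb{R}^d$. By the classical L\'evy--Khintchine formula \eqref{LLLL}, its characteristic function at time $t_j$ equals $e^{t_j \psi_j(\xi)}$, with $\psi_j$ given by \eqref{levy khintchine}. Since $Y^{(j)}_{t_j} \stackrel{d}{=} X_{t_j e_j}$, the same expression holds for $\mathbb{E}\, e^{i\xi \cdot Y^{(j)}_{t_j}}$.

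Finally I would multiply the $N$ factors to conclude
\begin{equation*}
\mathbb{E}\, e^{i \xi \cdot X_t} \;=\; \prod_{j=1}^{N} e^{t_j \psi_j(\xi)} \;=\; e^{t_1 \psi_1(\xi) + \cdots + t_N \psi_N(\xi)} \;=\; e^{t \cdot \Psi(\xi)},
\end{equation*}
which is exactly \eqref{funzione caratteristica Levy multiparametrico}. The only step that requires a bit of care is making explicit that the decomposition in Proposition \ref{multiparameter} holds as an equality in law (for fixed $t$), which is sufficient here since we are computing the one-dimensional marginal $\mathbb{E}\, e^{i\xi \cdot X_t}$.
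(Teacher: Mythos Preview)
Your proof is correct and follows exactly the approach the paper indicates: the paper presents this proposition as an immediate consequence of the preceding considerations, namely the decomposition of Proposition \ref{multiparameter}(ii) combined with the one-parameter L\'evy--Khintchine formula \eqref{LLLL}, which is precisely the argument you spell out.
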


We will call \ref{simbolo Levy multiparametrico} the multidimensional L\'evy exponent.

\subsection{Autocorrelation function of Multiparameter L\'evy processes} \label{paragrafo autocorrelazione}

Consider a Multiparameter L\'evy process $\{X_t\}_{t\in \mathbb{R}_+^N}$ with values in $\mathbb{R}$. In the following Proposition we will explicitly compute the autocorrelation function between two ordered points in the parameter space, i.e. 
\begin{align}
\rho (X_s,X_t):= \frac{Cov (X_s, X_t)}{\sqrt{Var X_s} \sqrt{Var X_t}} \qquad s\preceq t .\label {autocorrelazione}
\end{align}
Of course, \ref{autocorrelazione} exists finite only in some cases, which will be specified in the following.
What we will find is the $N$-parameter extension of the well known formula holding in the case $N=1$, i.e. for classical L\'evy processes  (consult e.g. Remark 2.1 in \cite{Leonenko 2}):
\begin{align*}
\rho (X_s,X_t)= \sqrt{\frac{s}{t}} \qquad s\leq t.
\end{align*}

\begin{prop} \label{calcolo autocorrelazione Levy}
Let $\{X_t\}_{t\in \mathbb{R}_+^N}$ be a $N$-parameter L\'evy process with values in $\mathbb{R}$, having multidimensional L\'evy  exponent $\Psi (\xi)$     defined in \ref{funzione caratteristica Levy multiparametrico} and \ref{simbolo Levy multiparametrico}.
    For each $j=1, \dots, N$, let $\xi \to \psi _j(\xi)$ be twice differentiable  in a neighborhood of $\xi =0$, and such that $\psi'' _j(0)\neq 0$. Then the auto-correlation function defined in  \ref{autocorrelazione}    reads
\begin{align}
\rho (X_s,X_t)= \sqrt{ \frac{s\cdot \sigma ^2}{t\cdot \sigma ^2} } \qquad s\preceq t \label{autocorrelazione campi}
\end{align}
where $\cdot$ denotes the scalar product and $\sigma ^2 := -\Psi '' (0)$.
\end{prop}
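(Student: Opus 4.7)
The plan is to compute $\mathrm{Var}(X_t)$ and $\mathrm{Cov}(X_s,X_t)$ directly from the multidimensional characteristic function (\ref{funzione caratteristica Levy multiparametrico}), then combine them via the definition (\ref{autocorrelazione}). First I would set $\phi_t(\xi):=\mathbb{E}e^{i\xi X_t}=e^{t\cdot\Psi(\xi)}$ and differentiate twice at $\xi=0$. By the chain rule,
\begin{align*}
\phi_t'(\xi)&=\phi_t(\xi)\,t\cdot\Psi'(\xi),\\
\phi_t''(\xi)&=\phi_t(\xi)\bigl(t\cdot\Psi'(\xi)\bigr)^{2}+\phi_t(\xi)\,t\cdot\Psi''(\xi),
\end{align*}
so evaluating at $\xi=0$ gives $\phi_t'(0)=t\cdot\Psi'(0)$ and $\phi_t''(0)=(t\cdot\Psi'(0))^{2}+t\cdot\Psi''(0)$. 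The twice-differentiability hypothesis on each $\psi_j$ at the origin, combined with the classical fact that twice differentiability of a L\'evy exponent at zero is equivalent to finiteness of the second moment, ensures that $\mathbb{E}X_{t_je_j}^{2}<\infty$ for every $j$ and hence, via Proposition \ref{multiparameter}(ii), that $\mathbb{E}X_t^{2}<\infty$. This legitimizes $\mathbb{E}X_t=-i\phi_t'(0)$ and $\mathbb{E}X_t^{2}=-\phi_t''(0)$, yielding
\[
\mathrm{Var}(X_t)=-\phi_t''(0)-\bigl(-i\phi_t'(0)\bigr)^{2}=-t\cdot\Psi''(0)=t\cdot\sigma^{2}.
\]

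Next I would exploit independence of increments. For $s\preceq t$ the chain $0\preceq s\preceq t$ is increasing in the partial ordering, so item (2) of Definition \ref{Multiparameter Levy processes} asserts that $X_s-X_0=X_s$ and $X_t-X_s$ are independent. Writing $X_t=X_s+(X_t-X_s)$ then gives
\[
\mathrm{Cov}(X_s,X_t)=\mathrm{Var}(X_s)+\mathrm{Cov}\bigl(X_s,X_t-X_s\bigr)=\mathrm{Var}(X_s)=s\cdot\sigma^{2}.
\]
Plugging both quantities into (\ref{autocorrelazione}) produces
\[
\rho(X_s,X_t)=\frac{s\cdot\sigma^{2}}{\sqrt{(s\cdot\sigma^{2})(t\cdot\sigma^{2})}}=\sqrt{\frac{s\cdot\sigma^{2}}{t\cdot\sigma^{2}}},
\]
which is the claimed formula and which specializes to the classical one-parameter identity recalled in the text when $N=1$.

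The main obstacle is very mild: it lies in justifying the interchange of expectation and differentiation, i.e.\ in guaranteeing that $\mathbb{E}X_t^{2}<\infty$ from the pointwise hypothesis $\psi_j''(0)\neq 0$. This is a standard consequence of the fact that for one-parameter L\'evy processes, existence (and finiteness) of $\psi_j''(0)$ is equivalent to $\mathbb{E}X_{t_je_j}^{2}<\infty$, applied componentwise to the decomposition in Proposition \ref{multiparameter}(ii). Everything else is routine algebra on characteristic functions and a single use of independence of increments along the ordered pair $s\preceq t$.
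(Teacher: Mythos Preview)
Your proposal is correct and follows essentially the same approach as the paper: compute $\mathrm{Var}(X_t)=t\cdot\sigma^2$ from the characteristic function, then obtain $\mathrm{Cov}(X_s,X_t)=s\cdot\sigma^2$ via independence of increments along $0\preceq s\preceq t$. The only cosmetic differences are that the paper first passes through the decomposition $X_t\overset{d}{=}\sum_j Y_{t_j}^{(j)}$ of Proposition~\ref{multiparameter} and computes the moments of each summand separately, and that it expands $\mathbb{E}X_tX_s$ explicitly (invoking stationarity to rewrite $\mathbb{E}(X_t-X_s)=\mathbb{E}X_{t-s}$), whereas you differentiate $e^{t\cdot\Psi}$ directly and use bilinearity of covariance, which avoids the need for stationarity in the covariance step.
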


\begin{proof} Consider the decomposition of $X_t$ given in Proposition \ref{multiparameter}.
Since $\psi _j'' (0)$ exists, then $Y_{t_j} ^{(j)}$ has finite mean and variance:
\begin{align*}
\mathbb{E} Y_{t_j}^{(j)} = -i t_j \psi _j'(0) = t_j\, \mathbb{E} Y_1^{(j)} \\ \mathbb{E} (Y_{t_j}^{(j)})^2 = -t_j   \psi _j''(0) -t_j^2 \psi _j '(0) ^2 \\ \mathbb{V}ar Y_{t_j}^{(j)} =  -t_j   \psi _j''(0) = t_j \mathbb{V}ar  Y_{1}^{(j)}
\end{align*}
Letting $\mu:= (\mathbb{E} Y_{1}^{(1)}, \dots, \mathbb{E} Y_{1}^{(N)} )$ and $ \sigma  ^2 := -\Psi '' (0)=  (\mathbb{V}ar  Y_{1}^{(1)}, \dots , \mathbb{V}ar Y_{1}^{(N)})$, we get
\begin{align*}
\mathbb{E}X_t= -it\cdot \Psi '(0)= t\cdot \mu \\ \mathbb{V}ar X_t = -t \cdot \Psi ''(0)=t\cdot \sigma ^2
\end{align*}
Moreover, for $s\preceq t$, we have
\begin{align*}
\mathbb{E}X_tX_s& = \mathbb{E}(X_t-X_s)X_s + \mathbb{E}(X_s)^2\\
&= \mathbb{E}(X_t-X_s)\mathbb{E}X_s + \mathbb{E}(X_s)^2 \\
&= \mathbb{E}X_{t-s}\mathbb{E}X_s + \mathbb{E}(X_s)^2\\
&= \bigl ( (t-s)\cdot \mu \bigr ) \bigl (s\cdot \mu \bigr ) + s\cdot \sigma ^2 + (s\cdot \mu)^2
\end{align*}
where we used independence and stationarity of the increments along $\mathbb{R}^N_+$ increasing sequences. We thus have
\begin{align*}
Cov(X_t, X_s):= \mathbb{E}X_tX_s - \mathbb{E}X_t \mathbb{E}X_s= s\cdot \sigma ^2
\end{align*}
and the desired result immediately follows.
\end{proof}

\begin{os} \label{decadimento a potenza Levy} Let $|v|$ denote the euclidean norm of $v$.   In the limit  $|t| \to \infty$, we have that $\rho (X_s,X_t)$ behaves like $|t|^{-1/2}$.
Indeed, consider the scalar product in the denominator of \ref{autocorrelazione campi}, i.e. $t\cdot \sigma ^2 = |t|\, |\sigma ^2| \cos \theta$,
where $\theta$ is the angle between $t$ and $\sigma ^2$.
Now, observe that $\sigma ^2$ is a fixed vector of $\mathbb{R}^N_+$, with strictly positive components by the assumption $\psi ''_j(0)\neq 0$. . Since $t$ is in $\mathbb{R}^N_+$ also, by simple geometric arguments it follows that there exist two constants $c_1>0$ and $c_2>0$, which do not depend on $t$, such that $c_1\leq \cos \theta \leq c_2$. Then $k_1 |t|^{-1/2}\leq  \rho (X_s,X_t) \leq k_2   |t|^{-1/2}$ for two suitable constants $k_1>0$ and $k_2>0$ both independent of $t$.
\end{os}

\subsection{Multi-parameter semigroups of operators and their generators}

Let $\mathbb{B}$ be a Banach space equipped with the norm $||\cdot||_{\mathbb{B}}$. A $N$-parameter family $(T_t)_{t\in \mathbb{R}_+^N}$ of bounded linear operators  on $\mathbb{B}$ is said to be a $N$-parameters semigroup of operators if $T_0$ is the identity operator and the following property holds: 
\begin{align}T_{s+t}= T_s \circ T_t  \qquad \forall s,t \in \mathbb{R}_+^N. \label{semigroup property}\end{align}
We say that $(T_t)_{t\in \mathbb{R}_+^N}$ is strongly continuous if 
$$\lim _{t\to 0}||T_t u -u||_{\mathbb{B}}=0 \qquad \forall u\in \mathbb{B}.$$
Moreover, we say that $(T_t)_{t\in \mathbb{R}_+^N}$ is a contraction semigroup if, for any $t\in \mathbb{R}^N_+$, we have $||T_t u||_{\mathbb{B}} \leq ||u||_{\mathbb{B}}$.

\begin{ex}
Let $G_1, G_2, \dots, G_N$ be bounded operators on $\mathbb{B}$, such that $[G_i, G_k]:= G_iG_k-G_kG_i=0$ for all $i\neq k$. Consider the vector
\begin{align*}
G= (G_1, \dots, G_N).
\end{align*}
 Then, for all $t=(t_1, \dots, t_N)$, the family
\begin{align*}
T_t = e^{t_1 G_1} \circ \dots \circ  e^{t_N G_N} = e^{G\cdot t} 
\end{align*}
defines a strongly continuous semigroup on $\mathbb{B}$. In light of the following Definition \ref{definizione generatore}, we will call the vector $G$ the generator of the multiparameter semigroup.
\end{ex}

\begin{ex}
Let $(\mu _t)_{t\in \mathbb{R}^N_+}$ be a multiparameter convolution semigroup of probability measures on $\mathbb{R}^d$ (in the sense of Definition \ref{semigruppo di misure}) and let 
$\mathcal{C}_0(\mathbb{R}^d)$ be the  space of continuous funtions vanishing at infinity, equipped with the sup-norm.
Then 
$$
T_tq(x)= \int _{\mathbb{R}^d } q(x-y) \mu_t(dy)= \mu _t *q(x)  \qquad q\in \mathcal{C}_0(\mathbb{R}^d) \qquad t\in \mathbb{R}^N_+
$$
defines a strongly continuous contraction multi-parameter semigroup.
\end{ex}

Let $t=(t_1, \dots, t_N) \in \mathbb{R}^N_+$ and let $\{e_1, \dots, e_N\}$ be the canonical basis of $\mathbb{R}^N$. For each $j=1, \dots, N$, we refer to the one-parameter semigroups $T_{t_j e_j}$ as the marginal semigroups. By the property \ref{semigroup property} it follows that the marginal semigroups commute, i.e. $[T_{t_i e_i}, T_{t_j e_j} ]=0$ for $i\neq j$ and the following relation holds:
$$ T_t= T_{t_1 e_1}\circ T_{t_2 e_2} \circ \cdots \circ T_{t_Ne_N}$$

Now, let $G_i$ be the generator of $T_{t_i e_i}$, defined on $Dom (G_i)$.       It is well known that if $u\in Dom (G_i)$, then $ T_{t_i e_i} u\in Dom (G_i)$ and the following differential equation
\begin{align*}
\frac{d}{dt_i} w(t_i)= G_i w(t_i) \qquad w(0)= u
\end{align*}
is solved by $w(t_i)=  T_{t_i e_i} u$.
We here report the notion of generator of a multi-parameter semigroup (see [\cite{Butzer}, chapt 1]).

\begin{defin} \label{definizione generatore}
Let $(T_t)_{t\in \mathbb{R}^N_+}$ be a strongly continuous  $N$-parameter semigroup on $\mathbb{B}$ and let $G_i, i=1, \dots, N$, be the generators of the marginal semigroups, each defined on $Dom (G_i)$. We say that the vector
$$ G=(G_1, \dots, G_N)$$
is the generator of $(T_t)_{t\in \mathbb{R}^N_+}$,  defined on $Dom (G)=\bigcap _{j=1}^N Dom (G_j)$ .
\end{defin}

The above definition is intuitively motivated by the following result.

\begin{prop} \label{equazione gradiente}
Let $(T_t)_{t\in \mathbb{R}^N_+}$ be a strongly continuous $N$-parameter semigroup with generator $G$ according to Def.  \ref{definizione generatore}. Then, for $u\in \bigcap _{j=1}^N Dom (G_j)$, the function $w(t)= T_tu$ solves the following system of differential equations
\begin{align}
\nabla _t w(t)= Gw(t) \qquad w(0)=u \label{ppp}
\end{align}
where $\nabla _t$ denotes the gradient with respect to $t= (t_1, \dots, t_N)$.
Namely,  we have
\begin{align}
\frac{\partial}{\partial t_i} w(t)= G_i w(t) \qquad i=1\dots N \label{ogni equazione}
\end{align}
subject to $w(0)=u$.
\end{prop}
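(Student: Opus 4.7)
The plan is to reduce the multiparameter statement to the classical one-parameter semigroup theory by exploiting the factorization
\[
T_t = T_{t_1 e_1}\circ T_{t_2 e_2}\circ \cdots \circ T_{t_N e_N}
\]
and the commutativity $[T_{t_i e_i},T_{t_j e_j}]=0$ for $i\neq j$, both of which have already been established in the paragraph preceding Definition \ref{definizione generatore}.

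First I would fix an index $i$ and rewrite, using commutativity,
\[
w(t) = T_{t_i e_i}\bigl(v_i(t)\bigr), \qquad v_i(t):=\prod_{j\neq i} T_{t_j e_j}\,u,
\]
where the product is taken in any order. The one-parameter semigroup $(T_{t_i e_i})_{t_i\ge 0}$ is strongly continuous on $\mathbb{B}$ with generator $G_i$ on $\mathrm{Dom}(G_i)$, so the classical theorem (e.g. \cite{Butzer}) guarantees that whenever $v_i(t)\in\mathrm{Dom}(G_i)$,
\[
\frac{\partial}{\partial t_i}T_{t_i e_i}v_i(t)=G_i\, T_{t_i e_i}v_i(t)=G_iw(t).
\]
Thus the proof reduces to showing that $v_i(t)\in\mathrm{Dom}(G_i)$ for every $t\in\mathbb{R}^N_+$.

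The key technical step, which I expect to be the main obstacle, is the invariance lemma: if $u\in\mathrm{Dom}(G_i)$ and $j\neq i$, then $T_{t_j e_j}u\in\mathrm{Dom}(G_i)$ and $G_i T_{t_j e_j}u=T_{t_j e_j}G_i u$. This follows from the commutativity of the marginal semigroups together with the very definition of the generator: for $h>0$,
\[
\frac{T_{h e_i}\bigl(T_{t_j e_j}u\bigr)-T_{t_j e_j}u}{h}
= T_{t_j e_j}\!\left(\frac{T_{h e_i}u-u}{h}\right),
\]
and boundedness (hence continuity) of $T_{t_j e_j}$ lets one pass to the limit $h\to 0^+$, yielding both membership in $\mathrm{Dom}(G_i)$ and the commutation identity. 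Iterating this over the $N-1$ indices $j\neq i$ shows $v_i(t)\in\mathrm{Dom}(G_i)$ for all $t$, which is what is needed.

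Finally, combining these facts gives \eqref{ogni equazione} for each $i=1,\dots,N$, and hence \eqref{ppp} in gradient form; the initial condition $w(0)=u$ is immediate from $T_0=\mathrm{Id}$. A minor care point is that partial derivatives should be interpreted as right-derivatives at the boundary $t_i=0$, which is automatic since the one-parameter theorem is usually stated in that form for $C_0$-semigroups on $[0,\infty)$.
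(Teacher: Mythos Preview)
Your proposal is correct and follows essentially the same route as the paper: factor $T_t$ through the marginal semigroups, show that $\prod_{j\neq i}T_{t_je_j}u$ remains in $\mathrm{Dom}(G_i)$, and then apply the one-parameter differentiation formula. The only difference is that the paper obtains the invariance of $\mathrm{Dom}(G_i)$ under the other marginal semigroups by citing Propositions~1.1.8 and~1.1.9 of \cite{Butzer}, whereas you prove it directly from commutativity and boundedness; your argument is slightly more self-contained but otherwise identical in spirit.
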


\begin{proof}
Let us fix $i=1, \dots, N$. For $q\in Dom (G_i)$ it is true that  $T_{t_ie_i}q \in Dom (G_i)$ and
\begin{align}
\frac{d}{dt_i} T_{t_ie_i}q =G_i T_{t_ie_i}q \label{fff}
\end{align}
By using Propositions 1.1.8 and 1.1.9 in \cite{Butzer}, we know that if $u\in Dom (G_i)$  then $T_tu \in Dom (G_i)$ for any $t\in \mathbb{R}^N_+$. In particular, we have $\bigcirc ^N _{k=1, k\neq i} T_{t_ke_k}u \in Dom (G_i)$  Hence equation \ref{fff} holds for $q= \bigcirc ^N _{k=1, k\neq i} T_{t_ke_k}u $:

\begin{align}
\frac{d}{dt_i} T_{t_ie_i}  \bigcirc ^N _{k=1, k\neq i} T_{t_ke_k}u =G_i T_{t_ie_i} \bigcirc ^N _{k=1, k\neq i} T_{t_ke_k}u 
\end{align}
and the equation \ref{ogni equazione} for a fixed $i$ is found by using property \ref{semigroup property}. By choosing $u\in   \bigcap _{j=1}^N Dom (G_j)$ it is possible to repeat the same argument for all $i=1, \dots, N$, and the system of differential equations is obtained.   
\end{proof}

 By putting $t=0$ in  equation \ref{ppp} it follows that the generator $G$ can also be found by
\begin{align}
Gu= \nabla _t T_tu \bigl |_{t=0}  \qquad u\in    \bigcap _{j=1}^N Dom (G_j)      \label{altra definizione generatore}
\end{align}

For other results concerning multiparameter semigroups and generators consult \cite{Butzer}.
Moreover, for a general discussion on operator semigroups related to multiparameter Markov processes we refer to \cite{khosh}.

\vspace{1cm}
\begin{os}
A different definition of generator for multiparameter semigroups is given in  \cite{jacob} and \cite{schicks}. Here the authors defined the generator as the composition of the marginal generators, i.e.
$$G= G_1\circ  G_2 \circ \dots \circ G_N.$$
The motivation for such definition is that, for $u\in Dom (G_1 \circ \dots \circ G_N)$, the authors prove that $w(t)= T_tu$ solves the partial differential equation
\begin{align} \frac{\partial ^N}{\partial t_1\dots \partial t_N} w(t)=Gw(t)   \qquad    w(0)=u  \label{equazione a derivate parziali} \end{align}
where $t=(t_1, \dots t_N)$.
Also this approach seems to be very interesting, especially in the field of partial differential equations as it allows to find probabilistic solutions to equations of type \ref{equazione a derivate parziali}, containing a mixed derivative.
\end{os}

\subsection{Semigroups associated to Multiparameter L\'evy processes}

Let $(X_t)_{t\in \mathbb{R}^N_+}$ be a Multiparameter L\'evy process on $\mathbb{R}^d$ and let $(\mu _t)_{t\in \mathbb{R}^N_+}$ be the associated convolution semigroup of probability measures, i.e. $\mu _t$ is the law of $X_t$ for each $t$. Consider the operator
\begin{align}
T_t h (x):= \mathbb{E}\,  h(x+X_t)= \int _{\mathbb{R}^d} h(x+y)\mu _t(dy) \qquad h\in  \mathcal{C}_0(\mathbb{R}^d)\qquad t\in \mathbb{R}^N_+ \label{semigruppo levy}
\end{align}
where $\mathcal{C}_0(\mathbb{R}^d)$ denotes the space of continuous functions vanishing at infinity.
By using the properties of  $\{\mu _t\}_{t\in \mathbb{R}^N_+} $ it immediately follows that the family $(T_t)_{t\in \mathbb{R}^N_+}$ is a strongly continuous contraction semigroup on  $\mathcal{C}_0(\mathbb{R}^d)$; it is also positivity preserving, hence it is a Feller semigroup.
We now give a representation of this semigroup and its generator by means of pseudo-differential operators. We restrict to the Schwartz space of functions $\mathcal{S}(\mathbb{R}^d)$.

 We define the Fourier transform by
$$ \hat{h} (\xi)= \frac{1}{(2\pi)^{d/2}} \int _{\mathbb{R}^d} e^{-i\xi \cdot x} h(x)dx \qquad \xi \in \mathbb{R}^d$$
Since $h  \in \mathcal{S}(\mathbb{R}^d)$,  the following Fourier inversion formula holds:
$$
h(x) = \frac{1}{(2\pi)^{d/2}} \int _{\mathbb{R}^d} e^{i\xi \cdot x} \hat{h}(\xi)d\xi \qquad x\in \mathbb{R}^d
$$

\begin{te}\label{semigruppo iniziale} Let $(X_t)_{t\in \mathbb{R}^N_+}$ be a Multiparameter L\'evy process with L\'evy exponent $\Psi$ defined in \ref{funzione caratteristica Levy multiparametrico} and \ref{simbolo Levy multiparametrico}. Let  $(T_t)_{t\in \mathbb{R}^N_+}$ be the associated semigroup defined in \ref{semigruppo levy} and let $G=(G_1, \dots, G_N)$ be its generator. Then
\begin{enumerate}
\item For any $t\in \mathbb{R}^N_+$, $T_t$ is a pseudo-differential operator with symbol $e^{t\cdot \Psi}$, i.e.
\begin{align}T_th(x)=  \frac{1}{(2\pi)^{d/2}} \int _{\mathbb{R}^d} e^{i\xi \cdot x}   e^{t\cdot \Psi (\xi)} \hat{h}(\xi) d\xi \qquad h\in \mathcal{S}(\mathbb{R}^d)  \label{kkk}  \end{align}
\item $G$ is a pseudo-differential operator with symbol $\Psi$, i.e. for each $i=1, \dots, N$ we have
$$ G_ih(x)=  \frac{1}{(2\pi)^{d/2}} \int _{\mathbb{R}^d} e^{i\xi \cdot x}   \psi _i(\xi) \hat{h}(\xi)  d\xi \qquad h\in \mathcal{S}(\mathbb{R}^d)    $$ 

\end{enumerate}

\end{te}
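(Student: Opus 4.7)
The plan is to prove part (1) by inserting the Fourier inversion formula for $h$ into the definition of $T_t$ and swapping the order of integration, and then to obtain part (2) by differentiating the resulting representation in $t_i$ at $t=0$.

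For part (1), starting from $T_th(x)= \int_{\mathbb{R}^d} h(x+y)\mu_t(dy)$, I would substitute
\[
h(x+y)= \frac{1}{(2\pi)^{d/2}} \int_{\mathbb{R}^d} e^{i\xi\cdot(x+y)}\hat h(\xi)\,d\xi,
\]
and apply Fubini's theorem. This is legitimate because $\hat h\in\mathcal S(\mathbb{R}^d)$ is absolutely integrable and $\mu_t$ is a probability measure, so the joint integrand is dominated by $|\hat h(\xi)|$, which is in $L^1(\mathbb{R}^d)$. Interchanging yields
\[
T_th(x)= \frac{1}{(2\pi)^{d/2}}\int_{\mathbb{R}^d} e^{i\xi\cdot x}\hat h(\xi) \left[ \int_{\mathbb{R}^d} e^{i\xi\cdot y}\mu_t(dy) \right] d\xi,
\]
and the inner integral equals $e^{t\cdot\Psi(\xi)}$ by the characteristic function formula \ref{funzione caratteristica Levy multiparametrico}, which proves (1).

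For part (2), by Definition \ref{definizione generatore} the component $G_i$ is the generator of the marginal semigroup $T_{t_ie_i}$, so I compute
\[
G_ih(x)= \lim_{t_i\to 0^+}\frac{T_{t_ie_i}h(x)-h(x)}{t_i} = \lim_{t_i\to 0^+}\frac{1}{(2\pi)^{d/2}}\int_{\mathbb{R}^d} e^{i\xi\cdot x}\frac{e^{t_i\psi_i(\xi)}-1}{t_i}\hat h(\xi)\,d\xi,
\]
using (1) with $t=t_ie_i$. The pointwise limit of the integrand is $e^{i\xi\cdot x}\psi_i(\xi)\hat h(\xi)$, so the target identity will follow from dominated convergence. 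For the dominating function I would use the bound $|e^{t_i\psi_i(\xi)}-1|\leq t_i|\psi_i(\xi)|$ valid for $t_i\in (0,1]$, which holds because $\mathrm{Re}\,\psi_i(\xi)\leq 0$ (as $|e^{t_i\psi_i(\xi)}|\leq 1$ is the modulus of a characteristic function); then from the L\'evy--Khintchine representation \ref{levy khintchine} one has $|\psi_i(\xi)|\leq C(1+|\xi|^2)$, and since $\hat h\in\mathcal S$ the product $(1+|\xi|^2)|\hat h(\xi)|$ is integrable.

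The one genuine obstacle is the dominated-convergence step in (2), since one must combine the a priori bound $|\mathrm{Re}\,\psi_i|\leq 0$ coming from the characteristic-function interpretation with the quadratic growth estimate for $\psi_i$ coming from the L\'evy--Khintchine formula \ref{levy khintchine}. Alternatively, one can shortcut this step by noting, as already observed after Definition \ref{Multiparameter Levy processes}, that $(X_{t_ie_i})_{t_i\geq 0}$ is a classical one-parameter L\'evy process with L\'evy exponent $\psi_i$, and invoke the standard pseudo-differential representation of its Feller generator on $\mathcal S(\mathbb{R}^d)$.
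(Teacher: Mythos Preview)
Your proposal is correct and follows essentially the same approach as the paper: part (1) is proved via the characteristic function identity $\mathbb{E}e^{i\xi\cdot X_t}=e^{t\cdot\Psi(\xi)}$ and Fourier inversion, and part (2) by differentiating the resulting representation at $t=0$ and justifying the interchange of limit and integral by dominated convergence with the same bounds $\bigl|\tfrac{e^{t_i\psi_i(\xi)}-1}{t_i}\bigr|\le|\psi_i(\xi)|\le C_i(1+|\xi|^2)$. The only cosmetic difference is that the paper applies formula \ref{altra definizione generatore} (so the integrand carries the extra factor $\prod_{k\neq i}e^{t_k\psi_k(\xi)}$, bounded by $1$ and then set to $1$ at $t=0$), whereas you work directly with the marginal semigroup $T_{t_ie_i}$; this is a minor simplification, not a different route.
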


\begin{proof}

\begin{enumerate}
\item Since \ref{semigruppo levy} is a convolution integral, its Fourier transform can be computed as
$$  \frac{1}{(2\pi)^{d/2}} \int _{\mathbb{R}^d} e^{-i\xi \cdot x} T_t h(x)dx= \hat{h}(\xi) \mathbb{E} e^{i\xi\cdot X_t} $$
where $\mathbb{E} e^{i\xi\cdot X_t} =e^{t\cdot \Psi (\xi)} $  by using \ref{funzione caratteristica Levy multiparametrico}. Then Fourier inversion gives the result.

\item By applying formula \ref{altra definizione generatore}, we have that
\begin{align*}G_i h(x)&= \frac{\partial}{\partial t_i} T_tu (x)\biggl |_{t=0}\\
&= \biggl [ \lim _{t_i\to 0}    \frac{1}{(2\pi)^{d/2}} \int _{\mathbb{R}^d} e^{i\xi \cdot x}  \frac{e^{t_i \psi _i(\xi)}-1}{t_i} \prod _{k=1, k\neq i}^N e^{t_k\psi _k(\xi)} \hat{h}(\xi) d\xi  \biggr ]_{t=0}
\end{align*}
The limit can be taken inside the integral due to dominated convergence theorem. Indeed $|  e^{t_k\psi _k(\xi)}    | \leq 1$ for each $k$ because $e^{t_k\psi _k(\xi)}$ is the characteristic function of $\mu _{t_ke_k}$ (see \ref{LLLL}); moreover
$$
\biggl | \frac{e^{t_i \psi _i(\xi)}-1}{t_i} \biggr |\leq |\psi _i(\xi)| \leq C_i (1+|\xi|^2)
$$
where for the last inequality we used [\cite{applebaum} page 31]. Thus the absolute value of the integrand is dominated by $(1+|\xi|^2)\hat{h} (\xi)$. But the last function is independent of $t_i$ and is integrable on $\mathbb{R}^d$ because $\hat{h}$ is a Schwartz function. Then, by exchanging the limit and the integral, the result immediately follows.

\end{enumerate}
\end{proof}

\section{ Composition of random fields  }

\subsection{Subordinator fields} \label{paragrafo subordinator fields}

In order to treat the composition of random fields, the main object is provided by the following definition.

\begin{defin}
A Multiparameter Levy process $(H_{t})_{t\in \mathbb{R}^M_+}$  is said to be a subordinator field if, for some positive integer $N$,  it takes values in $\mathbb{R}^N_+ $ almost surely. 
\end{defin}

The above definition means that, almost surely,  $t\to H_t$   is a non decreasing function with respect to the partial ordering, i.e.  $t_1 \preceq t_2$ on $\mathbb{R}^M_+$ implies $H_{t_1} \preceq H_{t_2}$ on $\mathbb{R}^N_+$.

\begin{ex} \textbf{(Classical subordinators)}
If $N=M=1$, then $(H_{t})_{t\in \mathbb{R_+}}$ is a classical subordinator, i.e. a non-decreasing L\'evy process with values in $\mathbb{R}_+$. Hence it is such that
\begin{align*}
\mathbb{E}e^{-\lambda H_t}= e^{-t f(\lambda)}, \qquad \lambda \geq 0, 
\end{align*}
where the Laplace exponent $f$ is a so-called Bernstein function. Thus it is defined by
\begin{align*}
f(\lambda)= b\lambda + \int _{\mathbb{R}_+} (1-e^{-\lambda x}) \phi (dx)
\end{align*}
 where $b\geq 0$ is the drift cofficient and $\phi$ is the L\'evy measure, which is supported on $\mathbb{R}_+$ and satisfies $\int _{\mathbb{R}_+} \min (x,1) \phi (dx) <\infty$.
For more details on this subject consult \cite{bernstein}.
\end{ex}

\begin{ex} \label{multivariate sub} (\textbf{Multivariate subordinators})

If $M=1$ and  $N\geq 1$, then $(H_t) _{t\in \mathbb{R}_+ }$ is a multivariate subordinator in the sense of \cite{sato}. Thus it is a one-parameter L\'evy process with values  in $\mathbb{R}^N_+$, i.e. it is non decreasing in each marginal component. Here $H_t$ has Laplace transform 
\begin{align*}
\mathbb{E}e^{-\lambda \cdot H_t}= e^{-t S(\lambda)}, \qquad \lambda \in \mathbb{R}^N_+, 
\end{align*}
where the Laplace exponent $S$ is a multivariate Bernstein function. Hence it is defined by
\begin{align*}
S(\lambda)= b\cdot \lambda + \int _{\mathbb{R}^N_+} (1-e^{-\lambda \cdot x}) \phi (dx) \qquad \lambda \in \mathbb{R}^N_+
\end{align*}
where $b \in \mathbb{R}^N_+$, and the L\'evy measure $\phi$ is supported on $\mathbb{R}^N_+$ and satisfies 
$$
\int _{\mathbb{R}^N_+} \min (|x|, 1) \phi (dx) <\infty.
$$
It is known (see e.g. Sect. 2 in  \cite{macci}) that if $H_t$ has a density $p(x,t)$, then it solves 
\begin{align*}
\partial _t p(x,t)= b\cdot  \nabla _x p(x,t)-\mathcal{D}_x p(x,t) \qquad x\in \mathbb{R}^N_+ \qquad t>0
\end{align*}
where $ \mathcal{D}_x$ denotes the $N$-dimensional version of the generalized fractional derivative defined in \ref{prima equazione}, i.e:
\begin{align}
\mathcal{D}_x h(x)= \int _{\mathbb{R}^N_+} \bigl ( h(x)-h(x-y) \bigr ) \phi (dy) \qquad x\in \mathbb{R}^N_+. \label{marchaud multidimensionale}
\end{align}

\end{ex}

\begin{ex}(\textbf{Multivariate stable subordinators}) \label{Multivariate stable subordinators}
We here consider a special sub-case of Example \ref{multivariate sub}, in which the multivariate subordinator is stable.  In order to define this process by means of  its L\'evy measure, we need to use the spherical coordinates $r$ and $\hat{\theta}$, which respectively denote the lenght and the direction of jumps. Clearly $\hat{\theta}$ takes values in the set $\mathcal{C}^{N-1}=\{\hat{\theta}\in \mathbb{R}^N_+: |\hat{\theta}|=1\}$ because, by definition, all the marginal components make positive jumps.     So, a multivariate subordinator $(H_t)_{t\in \mathbb{R}_+}$ is said to be $\alpha$-stable if its L\'evy measure can be written in spherical coordinates as
$$\phi (dr, d \hat{\theta})= \frac{dr}{r^{\alpha +1}} \,  \sigma (d\hat {\theta}) \qquad r>0 \qquad \hat{\theta} \in \mathcal{C}^{N-1}$$
where $\alpha \in (0,1)$ denotes the stability index and $\sigma$ is the so-called spectral measure, which is proportional to the probability distribution of the jump direction $\hat{\theta}$. 
By simple calculations, it is easy to see that in this case the Laplace exponent takes the form
\begin{align}
S^{\alpha, \sigma} (\lambda) = k \int _ { \mathcal{C}^{N-1}}(\lambda \cdot \hat{\theta})^\alpha \sigma (d\hat{\theta}) \qquad \lambda \in \mathbb{R}^N_+
\end{align}
for a suitable $k>0$.
It is known that $H_t$ has a density $p(x,t)$ solving the following equation
\begin{align}
\partial _t p(x,t)= -\mathcal{D}_x^{\alpha, \sigma} p(x,t)
\end{align}
where $\mathcal{D}_x^{\alpha, \sigma}$ is the so-called \textit{fractional gradient}, i.e. a pseudo-differential operator defined by
\begin{align}
\mathcal{D}_x^{\alpha, \sigma} h(x)=  k \int _ { \mathcal{C}^{N-1}}(\nabla \cdot \hat{\theta})^\alpha h(x) \sigma (d\hat{\theta}) \label{fractional gradient}
\end{align}
Note that \ref{fractional gradient} represents the average under $\sigma (d\hat{\theta})$ of the fractional power of the directional
derivative along the direction $\hat{\theta}$. For some theory and applications about this operator consult Example 2.2 in \cite{macci}, chapter 6 in \cite{meer libro} and also \cite{garra, meerschaert benson}.

When $N=2$ the L\'evy measure has the form
$$
\phi (dr, d \theta)= \frac{dr}{r^{\alpha +1}} \,  \sigma (d\theta) \qquad r>0 \qquad    0\leq \theta \leq \frac{\pi}{2}
$$
and, by denoting $\lambda =(\lambda _1, \lambda _2)$, the Laplace exponent can be written as
$$
S^{\alpha, \sigma} (\lambda_1, \lambda _2) = k\int _0^{\pi /2} (\lambda _1 \cos \theta + \lambda _2 \sin \theta) ^\alpha \, \sigma (d\theta),
$$
whence the fractional gradient, acting of a function $(x,y)\to h(x,y)$, has the form
\begin{align}
\mathcal{D}_{x,y}^{\alpha, \sigma} h(x,y)=  k \int _0 ^{\pi /2} \biggl ( \cos \theta \frac{\partial}{\partial x}+  \sin \theta  \frac{\partial}{\partial y} \biggr )^\alpha h(x,y) \sigma (d\theta),
\end{align}
\end{ex}

\subsubsection{The general case}
In the general case where $N$ and $M$ are any positive integers, the Laplace transform of $H_t$ can be computed as follows. Let $t=(t_1, \dots, t_M)\in \mathbb{R}^M_+$ and let $\{e_1, \dots, e_M\}$ be the canonical basis of $\mathbb{R}^M$. We can use Proposition \ref{multiparameter} to say that there exist independent random vectors $Z^{(k)}_{t_k}$, $k=1, \dots, M$,  with $ Z ^{(k)}_{t_k} \overset{d}{=} H_{t_ke_k}$, such that
\begin{align*}
H_t\overset{d}{=} Z^{(1)}_{t_1} + \dots Z ^{(M)}_{t_M}
\end{align*}
But, by the construction of $(H_t)_{t\in \mathbb{R}^M_+}$, it follows that, for each $k=1, \dots, M$, the process  $(H_{t_ke_k})_{t_k \in \mathbb{R}_+}$ is a multivariate subordinator in the sense explained in the previous Example \ref{multivariate sub}. Hence there exist $b_k\in \mathbb{R}^N_+$ and a L\'evy measure $\phi _k$ on $\mathbb{R}^N_+$ (satisfying 
 $\int _{\mathbb{R}^N_+} \min (|x|, 1) \phi _k (dx) <\infty$) such that $H_{t_ke_k}$ has Laplace transform 
\begin{align*}
\mathbb{E}e^{-\lambda \cdot H_{t_ke_k}}= e^{-t_k S_k(\lambda)}, \qquad \lambda \in \mathbb{R}^N_+, 
\end{align*}
where $S_k$ is a multivariate Bernstein functions, defined by
\begin{align}
S_k(\lambda)= b_k\cdot \lambda + \int _{\mathbb{R}^N_+} (1-e^{-\lambda \cdot x}) \phi _k(dx). \label{vvv}
\end{align}

Hence the Laplace transform of $H_t$ can be compactly written as
\begin{align}
\mathbb{E} e^{-\lambda \cdot H_t} = e^{-t_1S_1 (\lambda)\dots - t_MS_M (\lambda) }= e^{-t\cdot S(\lambda)} \label{multidimensional Laplace transform}
\end{align}
where $t=(t_1, \dots, t_M)$ and
\begin{align}
S(\lambda)= \bigl (S_1 (\lambda), \dots, S_M(\lambda) \bigr ) \label{multidimensional Laplace exponent}
\end{align}
We call  \ref{multidimensional Laplace exponent} the multi-dimensional Laplace exponent of the subordinator field.
The above decomposition of a subordinator field into the sum (in distribution) of independent multivariate subordinators will play a decisive role in the following.

A sample path of a stable subordinator field is shown in \autoref{fig:sub-field}.
\begin{figure}
	\includegraphics[width=.6\textwidth]{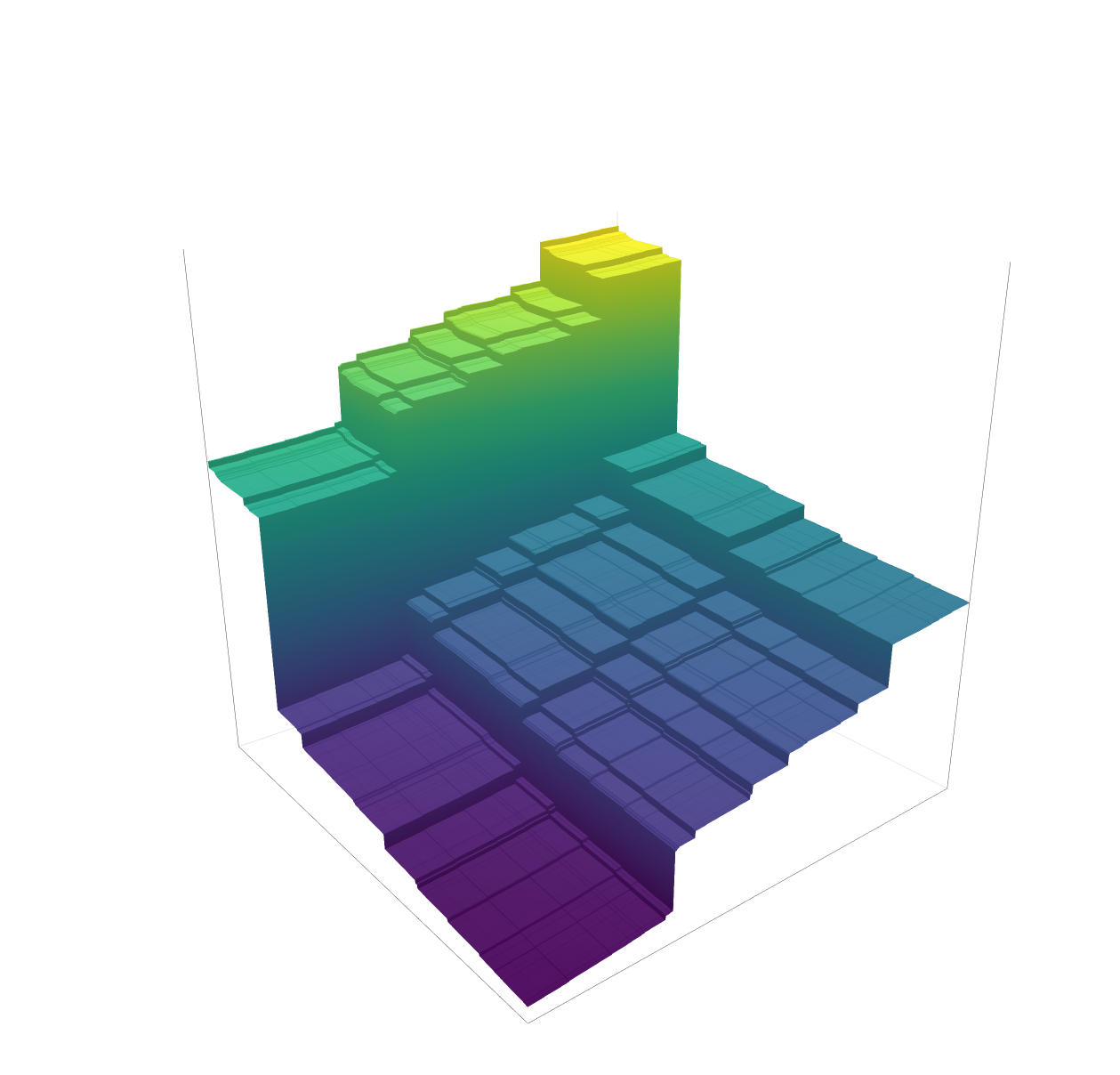}
	\caption{Sample path of a stable subordinator field.}
	\label{fig:sub-field}
\end{figure}

\subsection{Subordinated fields}

Let $(X_{s})_{s\in \mathbb{R}^N_+}$  be a $N$-parameter L\'evy process with values in $\mathbb{R}^d$ and let $(H_{t})_{t\in \mathbb{R}^M_+}$ be a subordinator field (in the sense of Sect. \ref{paragrafo subordinator fields}) with values in $\mathbb{R}^N_+ $.  
In the following, $(X_{s})_{s\in \mathbb{R}^N_+}$ and $(H_{t})_{t\in \mathbb{R}^M_+}$ are assumed to be independent. 
  We consider the subordinated random field
\begin{align}Z_t:=X_{H_t} \qquad t\in \mathbb{R}^M_+. \label{time changed random field}   \end{align}
It is known that  \ref{time changed random field}      is also a Multi-parameter L\'evy process (see [\cite{pedersen 2}, Thm. 3.12]).
Let $\mu _s$, $\rho _t$ and $\nu _t$ respectively denote the probability laws of $X_s$, $H_t$ and $Z_t$.     Then, by conditioning, for any Borel set $B\subset \mathbb{R}^d$ we have
\begin{align}
\nu _t(B)= \int _ {\mathbb{R}^N_+}  \mu _s(B) \, \rho _t(ds). \label{legge processo subordinato}
\end{align}

Processes of type \ref{time changed random field} have also been studied in the literature.

 In \cite{sato} the authors study the case $M=1$ and prove that $(Z_t)_{t\in \mathbb{R}^+}$ is again a L\'evy process and find the characteristic triplet.

 In \cite{pedersen 1}, \cite{pedersen 2} and \cite{pedersen 3}, the authors consider the general case $M\geq 1$; actually their study is more general, since they consider cone-parameter L\'evy processes subordinated by cone-valued L\'evy processes.

\vspace{0.5cm}

Now, let $(T_t) _{t\in \mathbb{R}^N_+}$ be the Feller semigroup associated to $X _t$, defined in \ref{semigruppo levy}, with generator $G=(G_1, \dots, G_N)$.
Moreover, let $(T^Z _t)_{t\in \mathbb{R}^M_+}$ be the Feller semigroup associated to $Z_t$, i.e.
\begin{align}
T^Z _t h(x):= \mathbb{E}\,  h(x+Z_t)= \int _{\mathbb{R}^d} h(x+y) \nu _t(dy) \qquad h\in  \mathcal{C}_0(\mathbb{R}^d) \qquad t\in \mathbb{R}^M_+ \label{semigruppo subordinato z}
\end{align}
where $\nu _t$ is the law of $Z_t$ defined in \ref{legge processo subordinato}, whence we can rewrite \ref{semigruppo subordinato z} as a subordinated semigroup:
\begin{align}
T^Z _t h(x)= \int _{\mathbb{R}^N_+} T_s h(x) \rho _t(ds) \qquad t\in \mathbb{R}^M_+
\end{align}
In the following theorem we determine the form of the generator $G^Z= (G^Z_1, \dots, G^Z_M)$ for the subordinated semigroup, by restricting to the Schwartz space $\mathcal{S}(\mathbb{R}^d)$. We obtain a multiparameter generalization of the well known Phillips formula (see e.g.   [\cite{sato book}, pag. 212]) holding for one-parameter subordinated semigroups.

\begin{te} \label{phillips}
For each $k=1, \dots, M$, we have 
\begin{align}
G^Z _k h(x)= b_k \cdot G\, h(x)+ \int _{\mathbb{R}^N_+} \bigl (T_z    h(x)-h(x) \bigr ) \phi _k (dz) \qquad h\in \mathcal{S}(\mathbb{R}^d).
\end{align}
where $b_k$ and $\phi _k$ have been defined in \ref{vvv}.
\end{te}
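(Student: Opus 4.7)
The plan is to obtain the Fourier symbol of the subordinated semigroup $(T^Z_t)$ and then differentiate in $t_k$ at $t=0$, following the same strategy used in Theorem~\ref{semigruppo iniziale}. First, conditioning on $H_t$ and using part (1) of Theorem~\ref{semigruppo iniziale}, I would write
\begin{align*}
T^Z_t h(x) = \int_{\mathbb{R}^N_+} T_s h(x)\,\rho_t(ds) = \frac{1}{(2\pi)^{d/2}} \int_{\mathbb{R}^d} e^{i\xi\cdot x}\left(\int_{\mathbb{R}^N_+} e^{s\cdot\Psi(\xi)}\,\rho_t(ds)\right)\hat h(\xi)\,d\xi,
\end{align*}
the interchange being legitimate since $|e^{s\cdot\Psi(\xi)}|\le 1$ on $\mathbb{R}^N_+$ and $\hat h\in\mathcal{S}(\mathbb{R}^d)$. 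The inner integral is the multivariate Laplace transform of $H_t$ evaluated at $-\Psi(\xi)$, which by analytic continuation of \eqref{multidimensional Laplace transform} (justified because $\operatorname{Re}(-\Psi(\xi))\ge 0$, so the integrand $1-e^{\Psi(\xi)\cdot x}$ stays bounded) equals $e^{-t\cdot S(-\Psi(\xi))}$. Hence $T^Z_t$ is a pseudo-differential operator with symbol $e^{-t\cdot S(-\Psi(\xi))}$.

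Second, I would differentiate in $t_k$ at $t=0$ to read off the symbol of $G^Z_k$. Substituting $\lambda = -\Psi(\xi)$ into the representation \eqref{vvv} of the multivariate Bernstein function $S_k$ gives
\begin{align*}
-S_k(-\Psi(\xi)) = b_k\cdot\Psi(\xi) + \int_{\mathbb{R}^N_+}\bigl(e^{z\cdot\Psi(\xi)}-1\bigr)\phi_k(dz).
\end{align*}
Moving $\partial_{t_k}$ inside the Fourier integral is precisely the dominated-convergence step of Theorem~\ref{semigruppo iniziale}(2): the difference quotient is controlled by $|S_k(-\Psi(\xi))|$, which grows at most polynomially in $|\xi|$ (since $\psi_j(\xi)=O(1+|\xi|^2)$ and $S_k$ is a Bernstein function, hence sublinear), and $\hat h$ has Schwartz decay.

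Third, I would invert the Fourier transform and split the symbol additively. The contribution $b_k\cdot\Psi(\xi)\,\hat h(\xi)$ inverts to $b_k\cdot G h(x)$ by part (2) of Theorem~\ref{semigruppo iniziale}, since $G$ is the pseudo-differential vector with symbol $\Psi$. For the remaining piece, Fubini gives
\begin{align*}
\frac{1}{(2\pi)^{d/2}}\int_{\mathbb{R}^d}e^{i\xi\cdot x}\int_{\mathbb{R}^N_+}\!\bigl(e^{z\cdot\Psi(\xi)}-1\bigr)\phi_k(dz)\,\hat h(\xi)\,d\xi = \int_{\mathbb{R}^N_+}\!\bigl(T_z h(x)-h(x)\bigr)\phi_k(dz),
\end{align*}
and combining both pieces yields the stated formula.

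The step I expect to be the main obstacle is justifying this last Fubini interchange, because the kernel $\phi_k$ need not be finite near the origin. Near $z=0$ one uses $|e^{z\cdot\Psi(\xi)}-1|\le |z|\,|\Psi(\xi)|\le C|z|(1+|\xi|^2)$ combined with $\int_{\mathbb{R}^N_+}\min(|z|,1)\phi_k(dz)<\infty$; for $|z|\ge 1$ one uses $|e^{z\cdot\Psi(\xi)}-1|\le 2$ and the fast decay of $\hat h$. Together these bounds produce an integrable majorant on $\mathbb{R}^d\times\mathbb{R}^N_+$, closing the argument.
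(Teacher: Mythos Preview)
Your proposal is correct and follows essentially the same route as the paper: identify the Fourier symbol of $T^Z_t$ as $e^{-t\cdot S(-\Psi(\xi))}$ via the Laplace transform of $H_t$, differentiate to obtain the symbol $-S_k(-\Psi(\xi))$ of $G^Z_k$, expand it through the Bernstein representation \eqref{vvv}, and invert term by term using \eqref{kkk}. If anything, you are more careful than the paper, which does not spell out the analytic-continuation or Fubini justifications you highlight.
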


\begin{proof}
We first compute the characteristic function of $Z_t= X_{H_t}$.
By conditioning, and using \ref{funzione caratteristica Levy multiparametrico} and \ref{multidimensional Laplace transform}, we have
\begin{align*}
\mathbb{E}e^{i\xi \cdot X_{H_t}}&= \int _{\mathbb{R}^N_+} \mathbb{E}e^{i\xi \cdot X_u} P(H_t \in du)\\ 
 & = \int _{\mathbb{R}^N_+} e^{u\cdot \Psi (\xi)}   P(H_t \in du)\\
&= \mathbb{E}e^{-(-\Psi (\xi)) \cdot H_t}\\
&=e^{-t\cdot S(-\Psi (\xi))} \qquad \xi \in \mathbb{R}^d
\end{align*}
where $t=(t_1, \dots, t_M)$ and 
$$
-S(-\Psi(\xi)):= \begin{pmatrix} -S_1 \bigl ( -\psi _1(\xi), \dots, -\psi _N(\xi) \bigr ) \\ : \\ : \\ -S_M \bigl ( -\psi _1(\xi), \dots, -\psi _N(\xi) \bigr )    \end{pmatrix}
$$

Thus, by using theorem \ref{semigruppo iniziale}, it follows that $T^Z_t$ is a pseudo-differential operator with symbol $e^{-t \cdot S(-\Psi)}$, i.e.

\begin{align}T^Z_th(x)=  \frac{1}{(2\pi)^{d/2}} \int _{\mathbb{R}^d} e^{i\xi \cdot x}   e^{-t \cdot S(-\Psi (\xi))} \hat{h}(\xi) d\xi \qquad h\in \mathcal{S}(\mathbb{R}^d)    \end{align}

while, for each $k=1, \dots, M$,  $G^Z_k$ is a pseudo-differential operator with symbol 
$$-S_k(-\Psi(\xi))=-S_k(-\psi _1 (\xi), \dots, -\psi _N(\xi)).$$
This means that
\begin{align} G^Z_k h(x)=  - \frac{1}{(2\pi)^{d/2}} \int _{\mathbb{R}^d} e^{i\xi \cdot x}   S_k(-\psi _1 (\xi), \dots, -\psi _N(\xi))    \,    \hat{h}(\xi)  d\xi \qquad h\in \mathcal{S}(\mathbb{R}^d)   \label{eee} \end{align}
But, using \ref{vvv}, we have that 
\begin{align}
-S_k(-\Psi (\xi)) = b_k\cdot \Psi (\xi)+ \int _{\mathbb{R}^N_+} (e^{z\cdot \Psi (\xi)}-1) \phi _k(dz) \label{ccc}
\end{align}
Then, after substituting \ref{ccc} in \ref{eee}, we can solve the inverse Fourier transform and taking into account the representation of $T_t$ given in \ref{kkk} we obtain the result.
 
\end{proof}

\begin{os} \label{operational} In the spirit of operational functional calculus, the well known Phillips Theorem (see e.g.  [\cite{sato book}, pag. 212]) can be informally stated as follows. Let a Markov process $(X_t)_{t \in \mathbb{R}_+}$ have generator $G$ and let  a subordinator $(H_t)_{t\in \mathbb{R}_+}$ have Bernstein function $f$. Then the subordinated process $(X_{H_t})_{t\in \mathbb{R}_+}$ has generator $-f(-G)$. 

In a similar way, our Theorem \ref{phillips} can be stated as follows.

 Let $(X_t)_{t\in \mathbb{R}^N_+}$ be a Multiparameter L\'evy process with generator $G=(G_1, \dots, G_N)$ and let $(H_t)_{t\in \mathbb{R}^M_+}$ be a subordinator field associated to the multivariate Bernstein functions $S_1, S_2, \dots, S_M$, namely its Laplace exponent is $S=(S_1, S_2, \dots, S_M)$. Then the subordinated field $(X_{H_t})_{t\in \mathbb{R}^M_+}$ has generator
$$ -S(-G):= \begin{pmatrix} 
 -S_1(-G_1, -G_2, \dots, -G_N)   \\
 -S_2(-G_1, -G_2, \dots, -G_N)   \\
\cdot \\ \cdot \\ 
 -S_M(-G_1, -G_2, \dots, -G_N) 
\end{pmatrix}
$$
\end{os}

\subsection{Stochastic solution to systems of integro-differential equations}

Our extension of the Phillips theorem, given in Theorem \ref{phillips}, provides a stochastic solution to some systems of differential equations. 

Indeed, let $(X_t)_{t\in \mathbb{R}^N_+}$ be a Multiparameter L\'evy process with values in $\mathbb{R}^d$. Moreover, let   $(H_t)_{t\in \mathbb{R}^M_+}$ be a subordinator field with values  in $\mathbb{R}^N_+ $ and let $(Z_t)_{t\in \mathbb{R}^M_+} = (X_{H_t})_{t\in \mathbb{R}^M_+}$ be the subordinated field. Then, by virtue of Proposition \ref{equazione gradiente}, and using the symbolic notation of Remark \ref{operational}, we have that, for any  $u\in \mathcal{S}(\mathbb{R}^d)$, the function $\mathbb{E}u (x+Z_t)$ solves the system 
\begin{align}
\begin{cases} \label{sistema generico}
\frac{\partial}{\partial t_1} h(x,t)= -S_1(-G_1, -G_2, \dots, -G_N)  h(x,t) \\
\frac{\partial}{\partial t_2} h(x,t)= -S_2(-G_1, -G_2, \dots, -G_N)  h(x,t) \\
\cdot \\ \cdot \\ 
\frac{\partial}{\partial t_M} h(x,t)= -S_M(-G_1, -G_2, \dots, -G_N)  h(x,t) \\ \\
h(x,0)= u(x)
\end{cases} \quad x \in \mathbb{R}^d, \,\,\, t\in \mathbb{R}^M_+
\end{align}
where $t= (t_1, \dots, t_M)$,  $G=(G_1, \dots, G_N)$ denotes the generator of  $(X_t)_{t\in \mathbb{R}^N_+}$ and $S_1, \dots, S_M$ are the multivariate Bernstein functions, i.e. the components of the Laplace exponent of  $(H_t)_{t\in \mathbb{R}^M_+}$ defined in     \ref{multidimensional Laplace exponent} .

\begin{ex}
Let $\{e_1, \dots, e_M\}$ be the canonical basis of $\mathbb{R}^M$. Assume that the subordinator field  $(H_t)_{t\in \mathbb{R}^M_+}$ is such that, for each $i=1, \dots, M$, the component $H_{t_ie_i}$ is a multivariate stable subordinator in the sense of Example  \ref{Multivariate stable subordinators}, with index $\alpha _i \in (0,1)$,  whose multivariate Bernstein function reads
\begin{align}
S_i^{\alpha_i, \sigma_i} (\lambda) = k_i \int _ { \mathcal{C}^{N-1}}(\lambda \cdot \hat{\theta})^{\alpha _i} \sigma _i(d\hat{\theta}).
\end{align}
Then the system \ref{sistema generico} takes the form
\begin{align}
\begin{cases}
\frac{\partial}{\partial t_1} h(x,t)= -k_1 \int _{\mathcal{C}^{N-1}} (-G \cdot \hat{\theta} )^{\alpha _1} h(x,t) \sigma _1(d\hat{\theta}) \\
  \frac{\partial}{\partial t_2} h(x,t)= - k_2\int _{\mathcal{C}^{N-1}} (-G \cdot \hat{\theta} )^{\alpha _2} h(x,t) \sigma_2(d\hat{\theta})              \\
\dots \\
  \frac{\partial}{\partial t_M} h(x,t)= -k_M \int _{\mathcal{C}^{N-1}} (-G \cdot \hat{\theta} )^{\alpha _M} h(x,t) \sigma_M(d\hat{\theta}) 
\end{cases} \quad x \in \mathbb{R}^d, \,\,\, t\in \mathbb{R}^M_+
\end{align}
where, on the right side, the fractional powers  $(-G \cdot \hat{\theta} )^{\alpha _i}$ are well defined because $-G \cdot \hat{\theta}$ is the generator of a contraction semigroup.

\end{ex}

\begin{ex} \label{esempio interessante}
Let $N=M=2$. Consider the bi-parameter, additive L\'evy process
\begin{align} X(t_1, t_2)= X_1(t_1)+ X_2(t_2) \label{esempio 2 processo iniziale} \end{align}
\end{ex}
where $X_1$ and $X_2$ are independent isotropic  stable processes with indices $\alpha _1 \in (0,2]$ and $\alpha _2\in (0,2]$ respectively. Let 
$$ H(t_1, t_2)= (H_1(t_1, t_2), H_2 (t_1, t_2))$$ 
be a subordinator field, such that $H(t_1, 0)$ and $H(0, t_2)$ are two bivariate stable subordinators in the sense of Example  \ref{Multivariate stable subordinators} , respectively having indices $\beta _1 \in (0,1)$ and $\beta _2\in (0,1)$ and spectral measures $\sigma _1$ and $\sigma _2$.
Let $$Z(t_1, t_2)= X_1(H_1(t_1, t_2))+ X_2(H_2(t_1, t_2))$$ be the subordinated field. Then, for any  $u\in \mathcal{S}(\mathbb{R}^d)$, the function $\mathbb{E}u(x+Z(t_1, t_2))$ solves the system
\begin{align} \label{secondo esempio}
\begin{cases}
\frac{\partial}{\partial t_1} h(x,t)= - k_1\int _0 ^{\pi/2}  \bigl ( (-\Delta )^{\alpha _1/2}   \cos \theta + (-\Delta )^{\alpha _2/2} \sin \theta    \bigr )^{\beta_1} h(x,t) \sigma _1(d \theta) \\ \\
  \frac{\partial}{\partial t_2} h(x,t)= -k_2 \int _0^{\pi/2} \bigl  ( (-\Delta )^{\alpha _1/2}   \cos \theta + (-\Delta )^{\alpha _2/2} \sin \theta   \bigr  )^{\beta_2} h(x,t) \sigma_2(d \theta) \\ \\
h(x,0)=u(x)             
\end{cases}
\end{align}
where $-(-\Delta )^{\alpha _i/2}$ denotes the fractional Laplacian. To write the system \ref{secondo esempio}, we used that, for $i=1,2$, the generator of the isotropic stable process $X_i$ is     $G_i=-(-\Delta )^{\alpha _i/2}$ (see e.g [\cite{applebaum}, page 166]).

\begin{ex} Consider again Example \ref{esempio interessante}.
In the special case where $\alpha _1= \alpha _2=2$, the process \ref{esempio 2 processo iniziale} is a so-called additive Brownian motion (see e.g. [\cite{khosh}, page 394]) and the above system simplifies to
\begin{align} 
\begin{cases}
\frac{\partial}{\partial t_1} h(x,t)= - C_1 (-\Delta )^{\beta _1}        h(x,t) \\
  \frac{\partial}{\partial t_2} h(x,t)= - C_2 (-\Delta )^{\beta _2}   h(x,t)  \\
h(x,0)=u(x)             
\end{cases}
\end{align}
for suitable constants $C_1, C_2 >0$.
\end{ex}

\begin{ex}
Let $N=1$ and $M>1$ (so that subordination increases the number of parameters).    So let $(X_t)_{t\in \mathbb{R}_+}$ be a one-parameter L\'evy process and let $(H_t)_{t\in \mathbb{R}^M_+}$ be a subordinator field with values in $\mathbb{R}_+$.
For example, assume that  $(X_t)_{t\in \mathbb{R}_+}$ is a standard Brownian motion in $\mathbb{R}^d$ and, for each $k=1, \dots, M$,  $H_{t_ke_k}$ is a stable subordinator of index $\beta _k \in (0,1)$ ($e_k$ denoting the $k$-th vector of the canonical basis).
Let $(Z_t)_{t\in \mathbb{R}^M_+} = (X_{H_t})_{t\in \mathbb{R}^M_+}$ be the subordinated field. Then  $\mathbb{E}u(x+Z_t)$ solves
\begin{align*}
\begin{cases}
\frac{\partial}{\partial t_1} h(x,t)= -(-\Delta )^{\beta _1}  h(x,t) \\
\frac{\partial}{\partial t_2} h(x,t)= - (-\Delta )^{\beta _2} h(x,t) \\
\cdot \\ \cdot \\ 
\frac{\partial}{\partial t_M} h(x,t)= - (-\Delta )^{\beta _M} h(x,t) \\ \\
h(x,0)= u(x)
\end{cases}
\end{align*}

\end{ex}

\section{Subordination by the inverse random field}\label{paragrafo sugli inversi}

Let $(H_t)_{t\in \mathbb{R}_+}$ be a multivariate subordinator in the sense of Example \ref{multivariate sub}, which takes values in $\mathbb{R}^N_+$. Hence it is defined by $H_t=(H_1(t), \dots, H_N(t))$, where each marginal component $H_j(t)$ is a classical subordinator. Consider a new random field $(\mathcal{L}_t)_{t\in \mathbb{R}^N_+}$ defined by
\begin{align} \mathcal{L}_t= \bigl (L_1(t_1), \dots, L_N(t_N) \bigr ) \qquad t=(t_1, \dots, t_N) \label{inverse random field} \end{align}
where  $L_j$ is the inverse hitting time of the subordinator $H_j$, i.e.
$$ L_j(t_j)= \inf \{x>0: H_j(x)>t_j \}$$
As stated in the introduction, we will call \ref{inverse random field} \textit{inverse random field}. 

Now, let $(X_t)_{t\in \mathbb{R}^N_+}$ be a $N$-parameter L\'evy process with values in $\mathbb{R}^d$.
We are interested in the subordinated random field $(Z_t)_{t\in \mathbb{R}^N_+}$ defined by
\begin{align}
Z_t=X_{\mathcal{L}_t} \qquad t\in \mathbb{R}^N_+ \label{campo subordinato con gli inversi}
\end{align}

This topic has many sources of inspiration. Above all, there is a well established theory 
(consult e.g.
\cite{becker,
R6,
kolokoltsov,
R1,
meer nane,
meer spa,
meer jap,
meer straka,
meer libro,
meer toaldo,
straka 2})
concerning semi-Markov  processes of the form
\begin{align} Z(t)= X(L(t)) \qquad t\geq 0 \label{semi markov}\end{align}
where $X$ is a L\'evy process in $\mathbb{R}^d$ and $L$ is the inverse hitting time of a subordinator $H$, i.e.
$$L(t)= \inf \{x>0: H(x)>t  \}$$
Such processes have a great interest in statistical physics, as they arise as scaling limits of suitable continuous time random walks.

\begin{ex} \label{diffusione anomala}
A special case (see e.g.  \cite{R3, R4, magdziarz, magdziarz 2})
is the process
\begin{align} Z(t)=B(L^\alpha (t)) \label{subdiffusion} \end{align}
 where $B$ is a $d$-dimensional standard Brownian motion and $L^\alpha$ is the inverse of a $\alpha$-stable subordinator independent of $B$, where $\alpha \in (0,1)$.   The process \ref{subdiffusion} is a so-called subdiffusion: the mean square displacement behaves as $t^{\alpha}$, i.e. the motion is delayed with respect to the Brownian behavior. This  models the case where the moving particle is trapped by inhomogeneities or perturbations in the medium; thus the particle runs on  Brownian paths, but, for arbitrary time intervals, it is forced to be at rest, which gives rise to a sub-diffusive dynamics. Diffusions in porous media and penetration of a pollutant in the ground have this type of motion (see \cite{metzler} for other applications of anomalous diffusions).  
 The random variable $B(L^\alpha (t))$ has a density solving the following anomalous diffusion equation
\begin{align} \mathcal{D}_t ^\alpha q(x,t)- \frac{t^\alpha}{\Gamma (1-\alpha)}\delta (x)= \frac{1}{2}   \Delta q(x,t) \label{prima eq diffusione} \end{align}
where  $\Delta $ denotes the Laplacian operator and      $ \mathcal{D}_t ^\alpha  $ is the Marchaud fractional derivative, defined by
\begin{align}
 \mathcal{D}_t ^\alpha  h(t):= \int _0^\infty \bigl ( h(t)- h(t-\tau) \bigr ) \frac{\alpha \tau ^{-\alpha -1}}{\Gamma (1-\alpha)} d\tau.
\end{align}
See also \cite{R5} for a tempered version of such operator.
We finally recall that recent models of anomalous diffusion in heterogeneous media, where the fractional order $\alpha$ is space-dependent, have been developed in \cite{R7, ricciuti, mladen} (see also \cite{R2} for a related model).
\end{ex}

Equation \ref{prima eq diffusione} is a special case of a more general theory. Indeed, as anticipated in the Introduction,  if $X$ and $L$ are independent, the connection of the process \ref{semi markov} with integro-differential equations is given by the following facts. Let $X$ have a density $p(x,t)$ solving 
$$\partial _t\, p(x,t)=G^*\, p(x,t)$$
where $G^*$ is the dual to the Markov generator. Moreover, let $L$ be the inverse of a subordinator with L\'evy measure $\nu$.
If $L$ has a density $l(x,t)$, then, by conditioning, $X(L(t))$ has a density 
$$p^*(x,t)= \int _0^\infty p(x,u) l(u,t)du.$$
Such a density solves
\begin{align}\mathcal{D}_t p^*(x,t)  - \overline{\nu} (t) p^*(x,0)    = G^*p^*(x,t)\label{equazione semi markov}\end{align}
where $\overline{\nu}(t)= \int _t^\infty \nu (dx)$ and the operator $\mathcal{D}_t$, usually called generalized Marchaud fractional derivative, is defined by
\begin{align}
 \mathcal{D}_t   h(t):= \int _0^\infty \bigl ( h(t)- h(t-\tau) \bigr )\nu (d\tau). \label{MMM}
\end{align}

Concerning the link between semi-Markov processes and  non-local in time equations, consult also \cite{Michelitsch,pachon} for a discrete-time model and \cite{ricciutitoaldo} for the theory of abstract equations related to semi-Markov Random evolutions.

The rest of this section will be structured as follows. A special case of biparameter L\'evy processes will be treated in subsection \ref{prodotto tensoriale} and a related model of anisotropic subdiffusion will be presented in subsection  \ref{paragrafo diffusione anomala}. Finally, the special case where the $L_j$, $j=1, \dots, N$,  are independent will be presented in subsection  \ref{ultimo paragrafo} and  some long range dependence properties will be analysed.

\subsection{Subordination of some two-parameter L\'evy processes} \label{prodotto tensoriale}

Consider the following biparameter L\'evy process with values in $\mathbb{R}^d$:
\begin{align} X(t_1, t_2)= (X_1 (t_1), X_2(t_2))  \label{bivariate Levy process} \end{align}
where $X_1$ and $X_2$ are (possibly dependent) L\'evy processes with values in $\mathbb{R}^ {d_1}$ and $\mathbb{R}^{d_2}$ respectively, with $d_1+d_2=d$.

Consider now a bivariate subordinator $(H_1 (t), H_2(t))$  and the related bivariate inverse random field $(L_1(t_1), L_2(t_2))$ as defined in \ref{inverse random field}. 

We will consider the following assumptions:

\textit{A1)}  $X_1 (t_1)$ and $X_2(t_2)$ have marginal densities $p_1(x_1,t)$ and $p_2(x_2,t)$
satysfying the following forward equations:
$$ \frac{\partial}{\partial t} p_i(x_i, t)= G^*_i \, p_i(x_i,t) \qquad i=1,2$$
where $G^*_1$ and $G^*_2$ are the duals to the generators of $X_1$ and $X_2$.

\vspace{0.1cm}

\textit{A2)} $X(t_1, t_2)$ has density $p(x_1, x_2, t_1, t_2)$ satysfying the system 
$$ \frac{\partial}{\partial t_i} p(x_1, x_2, t_1, t_2)= G_i^* p(x_1, x_2, t_1, t_2)  \qquad i=1,2.$$

\vspace{0.1cm}

\textit{A3) } For all $t_1, t_2>0$, the random vector $(H_1(t_1), H_2(t_2))$ has a density $q(x_1, x_2, t_1, t_2)$
\footnote{Observe that the random field   $(t_1, t_2) \to (H_1(t_1), H_2(t_2))$  is not a biparameter L\'evy process even if $t\to (H_1(t), H_2(t))$  is a multivariate subordinator, unless the two marginal components are independent. }.
\vspace{0.1cm}

 We now consider the subordinated random field
\begin{align} 
	Z(t_1, t_2)= X(L_1(t_1), L_2(t_2)) 
	\label{caso prodotto} 
\end{align}

The following Proposition gives a generalization of equation \ref{equazione semi markov} adapted to the random field \ref{caso prodotto}.

\begin{prop} \label{equazione inversi}
Under the assumptions A1),   A2) ,  A3) , the random vector  $X(L_1(t_1), L_2(t_2))$ has a density $h(x_1, x_2, t_1, t_2)$ satisfying 
\begin{align}
\mathcal{D}_{t_1, t_2} h (x_1, x_2, t_1, t_2) = (G^*_1+G^*_2) h (x_1, x_2, t_1, t_2) \qquad x_1\neq 0, x_2 \neq 0
\end{align}
where $\mathcal{D}_{t_1, t_2}$ is the bidimensional version of the generalized fractional derivative, defined in \ref{marchaud multidimensionale}, i.e.
$$
\mathcal{D}_{t_1, t_2} h(t_1, t_2)= \int _{\mathbb{R}^2_+} \bigl ( h(t_1, t_2)- h(t_1-\tau _1, t_2 -\tau _2) \bigr ) \phi (d\tau _1, d\tau _2)
$$
\end{prop}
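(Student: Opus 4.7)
I follow the template of the one-parameter argument leading to equation (\ref{equazione semi markov}), but carry it out in the double Laplace domain in $(t_1,t_2)$. Conditioning on the inverse field and using the identity $\{L_1(t_1)>s_1,\,L_2(t_2)>s_2\}=\{H_1(s_1)<t_1,\,H_2(s_2)<t_2\}$, the joint density $l=\partial_{s_1}\partial_{s_2}P(L_1>s_1,L_2>s_2)$ exists by A3, and
\begin{equation*}
h(x_1,x_2,t_1,t_2)=\int_{\mathbb{R}^2_+}p(x_1,x_2,s_1,s_2)\,l(s_1,s_2,t_1,t_2)\,ds_1\,ds_2.
\end{equation*}
Applying double Laplace transform in $(t_1,t_2)$ to the survival function above and exchanging it with the $s$-derivatives yields
\begin{equation*}
\tilde l(s_1,s_2,\lambda_1,\lambda_2)=\frac{1}{\lambda_1\lambda_2}\,\partial_{s_1}\partial_{s_2}\Phi(s_1,s_2,\lambda_1,\lambda_2),\qquad \Phi:=\mathbb{E}\bigl[e^{-\lambda_1 H_1(s_1)-\lambda_2 H_2(s_2)}\bigr].
\end{equation*}

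The crucial step is to prove the differential identity
\begin{equation*}
(\partial_{s_1}+\partial_{s_2})\Phi=-S(\lambda_1,\lambda_2)\,\Phi,
\end{equation*}
where $S$ is the Laplace exponent of the bivariate subordinator $(H_1,H_2)$. This is established by a case split: for $s_1<s_2$, independence of the increments of the bivariate subordinator gives $\Phi=\exp(-s_1 S(\lambda_1,\lambda_2)-(s_2-s_1)S(0,\lambda_2))$, and a direct differentiation produces the identity; the case $s_1>s_2$ is symmetric with roles swapped. Applying $\partial_{s_1}\partial_{s_2}$ to both sides and dividing by $\lambda_1\lambda_2$ propagates this to $\tilde l$, giving $(\partial_{s_1}+\partial_{s_2})\tilde l=-S(\lambda_1,\lambda_2)\,\tilde l$.

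Now apply $G_1^*+G_2^*$ to $\tilde h=\int p\,\tilde l\,ds_1\,ds_2$; by A2 this equals $\int(\partial_{s_1}p+\partial_{s_2}p)\,\tilde l\,ds_1\,ds_2$. Integration by parts produces boundary contributions at $s_1=0$ and $s_2=0$; since $X(0,s_2)=(0,X_2(s_2))$, one has $p(x_1,x_2,0,s_2)=\delta(x_1)p_2(x_2,s_2)$, and symmetrically at $s_2=0$, so these terms vanish precisely under the hypothesis $x_1\neq 0$, $x_2\neq 0$. Combining with the identity above yields $(G_1^*+G_2^*)\tilde h=S(\lambda_1,\lambda_2)\,\tilde h$. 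A direct computation (expanding $\mathcal D_{t_1,t_2}h$ via Fubini and recognizing $\int(1-e^{-\lambda\cdot\tau})\phi(d\tau)=S(\lambda)$) gives $\widetilde{\mathcal D_{t_1,t_2}h}=S(\lambda_1,\lambda_2)\,\tilde h$, and injectivity of the Laplace transform concludes.

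The main obstacle is the identity for $\Phi$: since $(H_1(s_1),H_2(s_2))$ is \emph{not} the bivariate subordinator evaluated at a single time, its joint Laplace transform cannot be read off directly from $S$, and must be decomposed by independence of increments on either side of the diagonal $s_1=s_2$; checking that both pieces collapse to $-S(\lambda)\Phi$ is the key algebraic computation, and the derivative of $\Phi$ is discontinuous across the diagonal, so the identity is pointwise only off that null set. A secondary technicality is justifying the interchange of the Laplace transform with the $(s_1,s_2)$-derivatives and the vanishing of boundary terms in the integration by parts, which is precisely the source of the restriction $x_1,x_2\neq 0$.
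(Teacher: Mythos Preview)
Your argument has a genuine gap: the representation
\[
h(x_1,x_2,t_1,t_2)=\int_{\mathbb{R}^2_+}p(x_1,x_2,s_1,s_2)\,l(s_1,s_2,t_1,t_2)\,ds_1\,ds_2
\]
is incorrect, because the law of $(L_1(t_1),L_2(t_2))$ is \emph{not} absolutely continuous with respect to two--dimensional Lebesgue measure. Since $L_1$ and $L_2$ are the inverses of the marginals of the \emph{same} bivariate subordinator $(H_1,H_2)$, any jump of $(H_1,H_2)$ with both coordinates strictly positive forces $L_1(t_1)=L_2(t_2)$ for a whole rectangle of values $(t_1,t_2)$. Hence $P\bigl(L_1(t_1)=L_2(t_2)\bigr)>0$ in general, and the distribution of $(L_1(t_1),L_2(t_2))$ carries a singular component on the diagonal $\{s_1=s_2\}$. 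Assumption~A3 (existence of a density for $(H_1(s_1),H_2(s_2))$) does not rule this out. The paper's proof makes this explicit: it writes
\[
h=\int\!\!\int p(x_1,x_2,u,v)\,l(u,v,t_1,t_2)\,du\,dv+\int p(x_1,x_2,u,u)\,l_*(u,t_1,t_2)\,du,
\]
and the diagonal term $l_*$ is handled separately throughout.

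You actually detect the symptom of this problem yourself: you note that $\partial_{s_i}\Phi$ is discontinuous across $\{s_1=s_2\}$ and then dismiss this as a null--set issue. It is not. When you apply $\partial_{s_1}\partial_{s_2}$ to a function whose first partials jump across the diagonal, the mixed second derivative acquires a distributional term supported on $\{s_1=s_2\}$; this is exactly the Laplace transform of $l_*$. Equivalently, if you perform your integration by parts on the two triangles $\{s_1<s_2\}$ and $\{s_1>s_2\}$ separately (as you must, given the lack of regularity), you pick up boundary contributions along the diagonal that do not cancel, since $\partial_{s_1}\Phi$ and $\partial_{s_2}\Phi$ have different one--sided limits there. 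Those diagonal boundary terms are precisely what produces the $l_*$ contribution, and they combine with the off--diagonal piece to give $(G_1^*+G_2^*)h$. Your Laplace--transform route can be made to work, but only after you reinstate and track the diagonal component; as written, the argument computes the governing equation for the wrong function.
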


\begin{proof}

Under assumption \textit{A3)},  the distribution of $(L_1(t_1), L_2(t_2))$ is the sum of two components (see [\cite{macci}, sect. 3.1]): the first one is absolutely continuous with respect to the bi-dimensional Lebesgue measure, with density $l$, namely
$$ P(L_1(t_1) \in dx_1, L_2(t_2) \in dx_2) = l(x_1, x_2, t_1, t_2) dx_1 dx_2 \qquad x_1\neq x_2$$
while the second one has support on the bisector line $x_1 = x_2$, with one dimensional Lebesgue density $l_*(x,t_1, t_2)$ (i.e.  $P(L_1(t_1)= L_2(t_2))= \int _0^\infty l_*(x,t_1, t_2)dx)$.

Then, by using a simple conditioning argument,  the random vector $X(L_1(t_1), L_2(t_2))$ has density
\begin{align*} h(x_1, x_2, t_1, t_2)= &\int _0^\infty \int _0^\infty p(x_1, x_2, u,v) l(u,v,t_1, t_2) dudv \\ + & \int _0^\infty p(x_1, x_2, u,u) l_*(u, t_1, t_2)du \end{align*}

By applying $\mathcal{D}_{t_1, t_2}$ to both sides and using [\cite{macci}, Thm 3.6] we have
\begin{align}
& \mathcal{D}_{t_1, t_2}h(x_1, x_2,t_1, t_2)  \notag \\
&=- \int _0^\infty \int _0^\infty p(x_1, x_2,u,v) \frac{\partial}{\partial u}
l(u,v,t_1, t_2)dudv  \notag \\
&- \int _0^\infty \int _0^\infty p(x_1, x_2,u,v) \frac{\partial}{\partial v}
l(u,v,t_1, t_2)dudv  \notag \\
& - \int _0^\infty  p(x_1, x_2, u,u)   \frac{\partial}{\partial u}l_*(u,t_1,
t_2) du .
\end{align}
Now, we integrate by parts by using  assumptions \textit{A1} and  \textit{A2}. We also use that $X_1(0)=0$ and $X_2(0)=0$ almost surely, which implies that $P(X_1(0) \in A, X_2(t_2)\in B)= \mathcal{I} _{(0\in A)} P(X_2 (t_2)\in B)$ and $P(X_1(t_1) \in A, X_2(0)\in B)= P(X_1 (t_1)\in A) \mathcal{I} _{(0\in B)}$; thus we get
\begin{align*}
& \mathcal{D}_{t_1, t_2}h(x_1, x_2,t_1, t_2) \\
&= G^*_1 \int _0^\infty \int _0^\infty p(x_1, x_2, u,v) l(u,v,t_1,
t_2)dudv +\delta (x_1) \int _0^\infty p_2(x_2, v) l(0,v,t_1t_2)dv + \\
&+ G^*_2 \int _0^\infty \int _0^\infty p(x_1, x_2, u,v) l(u,v,t_1,
t_2)dudv +\delta (x_2) \int _0^\infty p_1(x_1, u) l(u,0,t_1t_2) du \\
& + (G^*_1+G^*_2) \int _0^\infty p(x_1, x_2, u,u) l_*(u,t_1,
t_2) du +\delta (x_1)\delta (x_2) \overline{\phi}(t_1, t_2)
\end{align*}
where $$\overline{\phi}(t_1, t_2)= \int _{t_1}^\infty \int _{t_2}^\infty \phi (dx_1, dx_2).$$
In the above calculations we have taken into account that  $$\frac{\partial p(x_1, x_2, u,u)}{\partial u}= (G^*_1 +G^*_2)p(x_1, x_2, u,u)$$ since the total derivative of $p(x_1, x_2, t_1, t_2)$, with $t_1=u$ and $t_2=u$, is given by 
$$ \frac{\partial p}{\partial t_1} \frac{\partial t_1}{\partial u}+   \frac{\partial p}{\partial t_2} \frac{\partial t_2}{\partial u}= G^*_1 p+ G^*_2p.$$

 In the region $x_1\neq 0, x_2 \neq 0$ we have
\begin{align*}
& \mathcal{D}_{t_1, t_2}h(x_1, x_2,t_1, t_2) \\
=& (G^*_1 +G^*_2) \int _0^\infty \int _0^\infty  p(x_1, x_2, u,v) l(u,v,t_1, t_2)dudv \\
& + (G^*_1+G^*_2) \int _0^\infty  p(x_1, x_2, u,u) l_*(u,t_1,
t_2) du \\
& = (G^*_1 +G^*_2) h(x_1, x_2, t_1, t_2),
\end{align*}
which concludes the proof.
\end{proof}

A sample path of a time-changed field is shown in \autoref{fig:tchange-field}.
\begin{figure}
	\includegraphics[width=0.7\textwidth]{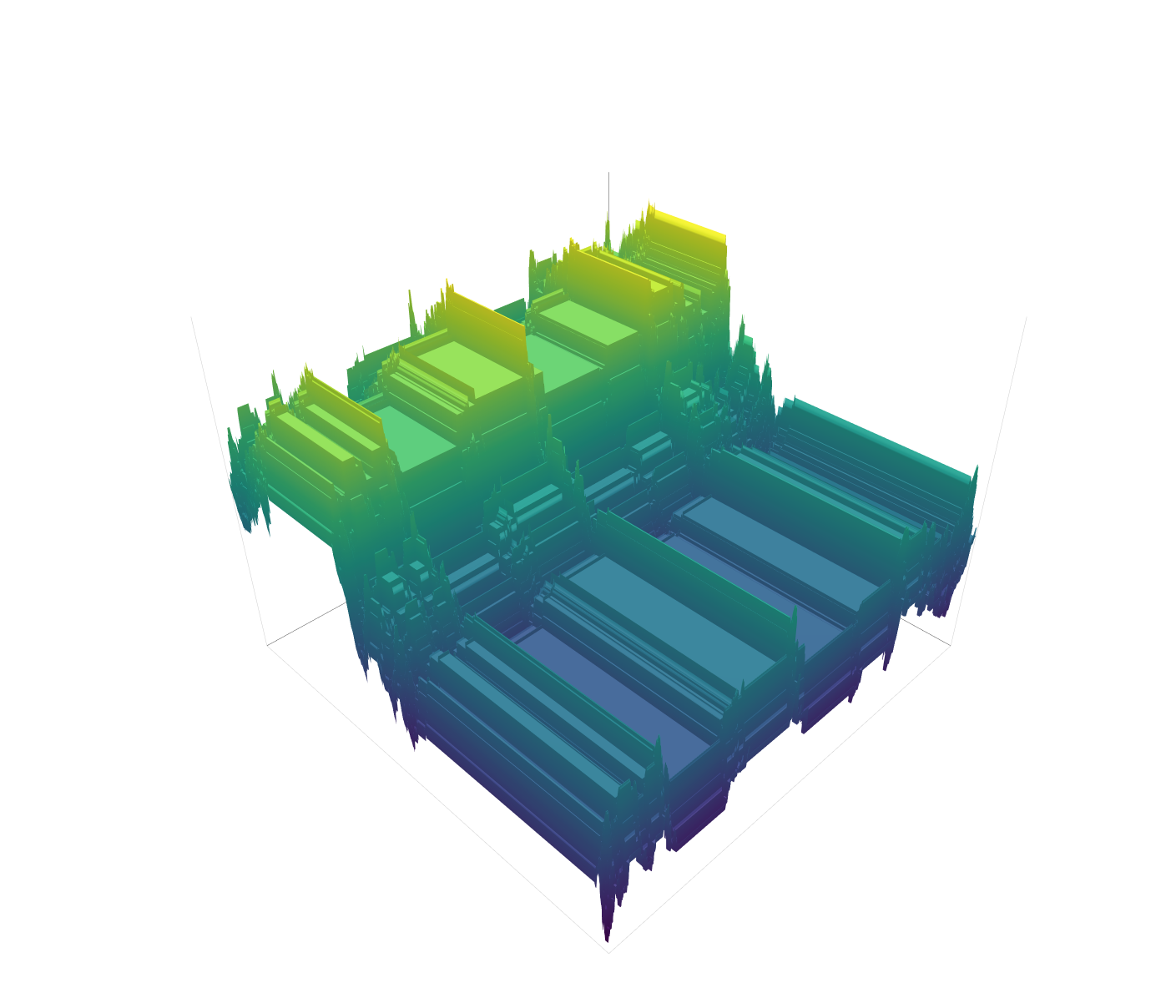}
	\caption{Sample path of a time-changed additive Brownian field with an inverse stable field.}
	\label{fig:tchange-field}
\end{figure}

\subsection{Anomalous diffusion in anisotropic media} \label{paragrafo diffusione anomala}

As a byproduct of the results of section \ref{prodotto tensoriale},   we here propose another model of  subdiffusion which  extends the one treated in Example \ref{diffusione anomala}, by including it as a special case.

As explained, the process \ref{subdiffusion} models a subdiffusion through an isotropic medium, i.e. the trapping effect is the same in all coordinate directions (e.g. all components of the Brownian motion are delayed by the same random time process). Hence  the subordinated process \ref{subdiffusion} is isotropic as well as the Brownian motion. 

Thus it is natural to search for a model of subdiffusion in the case where the external medium is not isotropic. Actually, a first model of anisotropic subdiffusion has been proposed in [\cite{macci}, Sect. 5].  In the following, we will improve such a model, by including it in a more general framework.

We recall some notions on operator stability (consult \cite{jurek} and \cite{sharpe}). A random vector $X$ with values in $\mathbb{R}^d$ is said to be \textit{operator stable} if, for any positive integer $n$, there exist a vector $c_n\in \mathbb{R}^d$ and a $d\times d$ matrix $A$ such that $n$ indipendent copies $X_1, \dots, X_n$ of $X$ satisfy
\begin{align} X_1+\dots + X_n \overset{d}{=} n^AX+c_n \label{operator stability} \end{align}
where the matrix power $n^A$ is defined by 
$$n^A= e^{A\ln n}= \sum _{k=0}^\infty \frac{1}{k!} A^k (\ln n)^k. $$ 
In the special case $A=\frac{1}{\alpha} I$, with $\alpha \in (0,2]$ and $I$ denoting the identity matrix,  we have that $X$ is $\alpha$-stable. In the general case, $A$ has eigenvalues whose real parts have the form $1/\alpha _i$, with $\alpha _i \in (0,2], i=1, \dots, d$. We stress that the matrix $A$ is not unique, i.e. there may be different $n\times n$ matrices satisfying \ref{operator stability} (unlike what happens in the stable case, where the index $\alpha$ is uniquely defined).

Operator stable laws are infinite divisible, hence they correspond to some L\'evy processes. A Levy process $X(t), t\geq 0$ is said to be an operator stable L\'evy motion if $X(1)$ is an operator stable random vector.  Note that such a process is characterized by the anisotropic scaling $X(ct) \overset{d}{=}c^A X(t)$.  This property is  a generalization of self-similarity of $\alpha$-stable processes where the scaling is the same for all coordinates, i.e.  $X(ct) \overset{d}{=} c^{1/\alpha} X(t) $.

We are now ready to present the model of anisotropic subdiffusion.
So, let us consider a bivariate subordinator $(H_1(t), H_2(t))$   which is constructed as an operator stable L\'evy motion with values in $\mathbb{R}^2_+$. In this case $A$ has eigenvalues whose real parts have the form $1/\alpha _i$, with $\alpha _i \in (0,1)$, $i=1,2$. Now, let $r>0$ and $ \theta \in [0, \frac{\pi}{2}]$ be the so-called Jurek coordinates (see e.g. \cite{jurek} and [\cite{meer libro}, page 185]) which are defined by the mapping $\mathbb{R}^2_+\ni x= r^A\hat{\theta}  $, where  $\hat{\theta}= (\cos \theta, \sin \theta)$.
 In this new coordinates the bi-dimensional L\'evy measure can be expressed as
$$ \phi ^{A,M} (dr, d\theta)= C \frac{dr}{r^2}M(d\theta) \qquad r>0 \qquad \theta \in \biggl [0, \frac{\pi}{2} \biggr ]$$
where $M$ is a probability measure on the angular component.  Then the operator $\mathcal{D}_{x}$, $x \in \mathbb{R}^2_+$, defined in formula \ref{marchaud multidimensionale} of Example \ref{multivariate sub}, takes the form
\begin{align} \mathcal{D}_{x}^{A,M} h(x)= C \int _0^{\pi/2}\int _{0}^\infty  \bigl (h(x)-h(x-r^A\hat{\theta}) \bigr )   \frac{dr}{r^2}M(d\theta) \label{AM}\end{align}
If $(H_1(t), H_2(t))$ is a bivariate stable subordinator (see Example \ref{Multivariate stable subordinators}), i.e. $A= \frac{1}{\alpha} I $, by a simple change of variables  one re-obtains the fractional gradient defined in formula \ref{fractional gradient}.

  Now, let $(L_1(t_1), L_2(t_2))$ be the inverse random field of $(H_1(t), H_2(t))$ and let $(B_1(t), B_2(t))$ be a bi-dimensional standard Brownian motion with independent components. Consider the time changed process 
\begin{align}Z(t)= \bigl(B_1(L_1(t)), B_2(L_2(t)) \bigr) \qquad t \geq 0 \label{anisotropic subdiffusion} \end{align}

The process \ref{anisotropic subdiffusion} is a model of anisotropic subdiffusion. Indeed consider the random variable
$$Z_{\theta} (t)= Z(t) \cdot \hat{\theta}$$
representing the displacement along the direction $\hat{\theta} = (\cos \theta, \sin \theta)$. By conditioning, the mean square displacement can be written as
\begin{align*}
\mathbb{E}Z_{\theta} ^2 (t) 
= \mathbb{E} L_1(t) \cos ^2 \theta + \mathbb{E} L_2 (t) \sin ^2 \theta
\end{align*}
which, in general, depends on $\theta$ because of anisotropy.

In the spirit of  [\cite{macci}, Sect. 4], a governing equation for the process \ref{anisotropic subdiffusion} can be obtained by considering the related random field $(B_1(L_1(t_1)), B_2(L_2(t_2)))$. Indeed, by applying Proposition \ref{equazione inversi} of the previous section, it has a density $h(x_1, x_2, t_1, t_2)$ satisfying 
the anomalous diffusion equation 
$$
\mathcal{D} ^{A,M} _{t} h(x_1, x_2, t_1, t_2)= \frac{1}{2} \Delta \, h(x_1, x_2, t_1, t_2) \qquad x_1 \neq 0, x_2 \neq 0 
$$
where the operator $\mathcal{D} ^{A,M} _{t}$, defined in \ref{AM}, now acts on  $t=(t_1, t_2)$.

\begin{ex}
If $L_1(t)=L_2(t)=L(t)$, where $L(t)$   is the inverse of a $\alpha$-stable subordinator, the process \ref{anisotropic subdiffusion} reduces to the isotropic subdiffusion \ref{subdiffusion}. In this case we have $\mathbb{E}L(t)=Ct^\alpha$. Thus   $\mathbb{E}Z_{\theta} ^2 (t)=Ct^\alpha$, which is independent of $\theta$ because of isotropy.
\end{ex}

\begin{ex} \label{esempio 2}
If $H_1(t)$ and $H_2(t)$ are independent stable subordinators, then the matrix $A$ is diagonal with elements $1/\alpha _1$ and $1/\alpha _2$. If $\alpha _1 \neq \alpha _2$ the  process \ref{anisotropic subdiffusion} is anisotropic, in such a way that $\alpha _1$ and $\alpha _2$ represent the spreading rates along the two coordinate directions. Indeed,  since $\mathbb{E}L_i(t)= C_i t^ {\alpha _i}$ for $i=1,2$, then the mean square displacement  along a direction $\hat{\theta}$ has the form $\mathbb{E} Z_\theta ^2 (t) =  C_1 t^ {\alpha _1} \cos ^2 \theta +  C_2 t^ {\alpha _2} \sin ^2\theta $ which depends on $\hat{\theta}$ and  asymptotically behaves like $t ^{\max (\alpha _1, \alpha _2)}$.
\end{ex}

\begin{ex}
If $A$ is a symmetric matrix with eigenvalues $1/\alpha _1$ and $1/\alpha _2$, where $\alpha _1$ and $\alpha _2$ are in $(0,1)$, then a rigid rotation of the coordinate system allows to find the two eigenvectors, along which the spreading rates are $\alpha _1$ and $\alpha _2$ respectively, which corresponds to the situation explained in Example \ref{esempio 2} .
\end{ex}

\subsection{Subordination by independent inverses } \label{ultimo paragrafo}

In the following, let $X(t_1, \dots, t_N)$ be a $N$-parameter L\'evy process with density $p(x,t)$ satisfying the system
\begin{align*}
\partial _{t_j} p(x,t)= G^*_j p(x,t) \qquad j=1, \dots, N
\end{align*}
with the usual notation $t=(t_1, \dots, t_N)$.  Assume that the marginal components $L_j(t_j)$ of the inverse random field \ref{inverse random field} are mutually independent, each having density $l_j(x, t_j)$ and L\'evy measure $\nu _j$.   Consider the subordinated random field
\begin{align}Z(t):=X(L_1(t_1), \dots, L_N(t_N)) \label{nuovo campo}
\end{align}
Before stating the next result, we introduce the following notation: for a given vector $v= (v_1, \dots, v_N)$, we introduce the vector $v^{(j)}$ defined by    $v^{(j)}= (v_1, \dots, v_{j-1}, 0, v_{j+1}, \dots, v_N)$.

\begin{prop}
Under the above assumptions, the subordinated field \ref{nuovo campo} has a density
 $p^*(x,t)$ satisfying the system
\begin{align*}
\mathcal{D}^{(\nu _j)} _{t_j}\,  p^* (x,t) - \overline {\nu}_j(t_j) \, p^{*} (x, t^{(j)})  = G_j ^*\, p^* (x,t) \qquad j=1, \dots, N
\end{align*}
where $\mathcal{D}^{(\nu _j)} _{t_j}$ denotes the generalized fractional derivative defined in \ref{MMM} with L\'evy measure $\nu _j$, and $\overline {\nu}_j(t_j)= \int _{t_j}^\infty \nu _j(d\tau)$.
\end{prop}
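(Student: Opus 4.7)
The plan is to mirror the derivation of the one-parameter semi-Markov equation \ref{equazione semi markov}, exploiting the mutual independence of the $L_j$ to handle one direction at a time. By conditioning on $\mathcal{L}_t$, I first write
\begin{equation*}
p^*(x,t) = \int_{\mathbb{R}^N_+} p(x,u)\, \prod_{k=1}^N l_k(u_k, t_k)\, du_1 \cdots du_N,
\end{equation*}
where $u=(u_1,\dots,u_N)$. Then, for fixed $j\in\{1,\dots,N\}$, I apply $\mathcal{D}^{(\nu_j)}_{t_j}$ to both sides; since the operator acts only on the $t_j$ variable it passes under the integral and strikes only the factor $l_j(u_j,t_j)$.

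The key one-parameter identity is $\mathcal{D}^{(\nu_j)}_{t_j} l_j(u_j, t_j) = -\partial_{u_j} l_j(u_j, t_j)$ for $u_j > 0$, together with the boundary value $l_j(0^+, t_j) = \overline{\nu}_j(t_j)$. Both facts are standard in the semi-Markov literature (see e.g.\ \cite{meer straka, meer toaldo}) and can be verified from the double Laplace representation $\tilde l_j(u_j, \lambda_j) = \lambda_j^{-1} f_j(\lambda_j)\, e^{-u_j f_j(\lambda_j)}$, where $f_j$ is the Bernstein function of $H_j$, combined with $\mathcal{L}_{t_j}[\mathcal{D}^{(\nu_j)}_{t_j} h](\lambda_j)=f_j(\lambda_j)\tilde h(\lambda_j)$.

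Substituting the identity and integrating by parts in $u_j$ splits $\mathcal{D}^{(\nu_j)}_{t_j} p^*(x,t)$ into a boundary term at $u_j=0$ (the contribution at $u_j=\infty$ vanishes under standard decay hypotheses) and an interior term involving $\partial_{u_j} p$. The interior term, by the forward equation $\partial_{u_j} p = G_j^* p$ and by commuting the pseudo-differential operator $G_j^*$ with the $u$-integration, yields exactly $G_j^*p^*(x,t)$. The boundary factor is $l_j(0^+,t_j)=\overline{\nu}_j(t_j)$, while the remaining $(N-1)$-fold integral is the density at $x$ of $X(L_1(t_1),\dots,L_{j-1}(t_{j-1}),0,L_{j+1}(t_{j+1}),\dots,L_N(t_N))$; since $L_j(0)=0$ almost surely, this coincides with $p^*(x,t^{(j)})$. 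Rearranging produces the claimed equation.

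The main technical obstacle is the rigorous justification of exchanging $\mathcal{D}^{(\nu_j)}_{t_j}$ with the integral and of the integration by parts; both require suitable integrability and decay conditions on $p$, $l_j$ and $\partial_{u_j} p$. If such pointwise hypotheses prove restrictive, one can instead argue on the level of the joint Fourier transform in $x$ and Laplace transform in $t$, where both sides reduce to explicit algebraic identities in the L\'evy exponent $\Psi$ of $X$ and in the Bernstein functions $f_1,\dots,f_N$.
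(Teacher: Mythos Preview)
Your proposal is correct and follows essentially the same route as the paper: write $p^*$ as the mixture of $p(x,u)$ against $\prod_k l_k(u_k,t_k)$, apply $\mathcal{D}^{(\nu_j)}_{t_j}$ under the integral using $\mathcal{D}^{(\nu_j)}_{t_j} l_j=-\partial_{u_j} l_j$ with $l_j(0,t_j)=\overline{\nu}_j(t_j)$, integrate by parts in $u_j$, and identify the boundary integral with $p^*(x,t^{(j)})$ via $l_j(u_j,0)=\delta(u_j)$. The paper's argument is the same, only stated more tersely and without your remarks on technical justification or the alternative Fourier--Laplace route.
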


\begin{proof}
By conditioning, \ref{nuovo campo} has a density
$$p^{*}(x,t)= \int _{R ^N _+} p(x, u_1, \dots, u_N)    \prod_{i=1 }^N l_i(u_i, t_i)        du_1 \cdots du_N$$
By applying $\mathcal{D}^{(\nu _j)} _{t_j}$ to both members and taking into account that such operator commutes with the integral, we have
$$ 
\mathcal{D}^{(\nu _j)} _{t_j} p^{*}(x,t)=-\int _{R ^N _+} p(x, u_1, \dots, u_N) \frac{\partial}{\partial u _j} l_j(u_j,t_j)  \prod_{i=1,  i\neq j }^N l_i(u_i, t_i)\, du_1 \cdots du_N
$$
where we used that the density $l_j(x,t_j)$ of an inverse subordinator satisfies the equation $\mathcal{D}^{(\nu _j)} _{t_j}  l_j(x,t_j)=-     \partial _x l_j(x,t_j)$ under the condition $l_j(0, t_j)= \overline{\nu}_j(t_j)$ (see e.g. \cite{kolokoltsov}).

Integrating by parts, we have
$$ 
\mathcal{D}^{(\nu _j)} _{t_j} p^{*}(x,t)= G_j^*\,  p^*(x,t)+\overline{\nu}_j(t_j) \int _{\mathbb{R}^{N-1}_+} p(x, u^{(j)})  \prod_{i=1,  i\neq j }^N l_i(u_i, t_i)du_i 
$$
where the last integral can be written as
$$
  \int _{\mathbb{R}^{N-1}_+} p(x, u^{(j)})  \prod_{i=1,  i\neq j }^N l_i(u_i, t_i)du_i =p^*(x, t^{(j)}) 
$$
because $l_j(u_j,0)= \delta (u_j)$. This completes the proof.
\end{proof}

\subsubsection{Long range dependence.}

Consider a process of type \ref{nuovo campo}. For each $k=1, \dots, N$, let  $L_k(t_k)$ be the inverse of a $\alpha$-stable subordinator.
The subordinated field exhibits a power law decay of the auto-correlation function which is slower with respect to the $|t|^{-\frac{1}{2}}$ decay holding for Multiparameter L\'evy processes (which was discussed in Remark  \ref{decadimento a potenza Levy} ). This can be useful in applied fields, where spatial data exhibit  long range dependence properties.

So, let $s\preceq t$. By using the results of Section \ref{paragrafo autocorrelazione},      we have
\begin{align*}
& Cov  (X_{L_s}, X_{L_t} ) \\
&= \mathbb{E} \bigl [  Cov  (X_{L_s}, X_{L_t} ) \bigl  | \, L_s, L_t  \bigr ] + Cov \bigl ( \mathbb{E}[ X_{L_s} | \, L_s, L_t  ],  \mathbb{E}[ X_{L_t} | \, L_s, L_t  ]     \bigr ) \\
& = \mathbb{E} [L_s \cdot \sigma  ^2] + Cov ( L_t \cdot \mu, L_s \cdot \mu )\\
&=  \mathbb{E} \biggl [  \sum _{k=1}^N \sigma _k^2 L_k(s_k)   \biggr  ] + Cov \biggl ( \sum _{k=1} ^N \mu _k L_k(t_k)      ,    \sum _{i=1} ^N \mu _i L_i(s_i)   \biggr )\\ 
& = \sum _{k=1}^N \sigma _k^2 \mathbb{E}L_k(s_k) +  \sum _{k=1} ^N  \sum _{i=1} ^N   \mu _k   \mu _i \, Cov \bigl ( L_k(t_k)      ,   L_i(s_i)  \bigr ) \\
&=  \sum _{k=1}^N \sigma _k^2 \mathbb{E}L_k(s_k) + \sum _{k=1}^N    \mu _k^2 \,   Cov \bigl ( L_k(t_k)      ,   L_k(s_k)  \bigr )
\end{align*}
where in the last step we used independence between  $L_i$ and $L_k$ when $i\neq k$. 
Putting $s=t$ we have
\begin{align*}
\mathbb{V}ar  X_{L_t} =  \sum _{k=1}^N \sigma _k^2 \mathbb{E}L_k(t_k) + \sum _{k=1}^N    \mu _k^2 \, \mathbb{V}ar L_k(t_k)     
\end{align*}

 By self-similarity of the inverse stable subordinator (consult e.g. Proposition 3.1 in \cite{meer jap}), we have $L_k(t_k)  \overset{d}{=} t_k ^\alpha L_k(1)$. Hence 
$$\mathbb{E}L_k(t_k)  = t_k ^\alpha \, \mathbb{E} L_k(1)  \qquad    \mathbb{V}ar L_k(t_k)  = t_k ^{2\alpha} \, \mathbb{V} ar L_k(1).$$

Thus, by using the notation $t^\beta := (t_1 ^\beta, \dots, t_N^\beta)$ we can write 
\begin{align*}
\mathbb{V}ar  X_{L_t} =  w\cdot t^\alpha + v\cdot t^{2\alpha} 
\end{align*}
where we defined  $w_k= \sigma _k^2\, \mathbb{E}L_k(1)$ and $v_k= \mu _k^2 \, \mathbb{V}ar L_k(1)$.

Moreover, by using Formula 10 in \cite{Leonenko 2} we have 
$$Cov \bigl ( L_k(t_k)      ,   L_k(s_k)  \bigr ) \sim \frac{s_k^{2\alpha}}{\Gamma (2\alpha +1)} \qquad t_k\to \infty.$$

In summary, for $| t| \to \infty$, we have
\begin{align} \label{autocorrelazione subordinato}
\rho (X_{L_s}, X_{L_t}) \sim  \begin{cases}    \frac{1}{|t^\alpha| ^{1/2}} \qquad \qquad &\textrm{if} \,\, \mu =0 \\  \frac{1}{|t^{2\alpha}|^{1/2}   }  \qquad & \textrm{if}\,\, \mu \neq 0 \end{cases}
\end{align}

\begin{os}
 What we found in   \ref{autocorrelazione subordinato}  is the multiparameter extension of the known formula holding in the $N=1$ case, see e.g.  Example 3.2 in \cite{Leonenko 2}. Here the authors considered the subordinated process $(X_{L(t)})_{t\in \mathbb{R}_+}$,  where $(X_t) _{t\in \mathbb{R}_+}  $ is  a L\'evy process and $(L(t))_{t\in \mathbb{R}_+}    $ is the inverse of a $\alpha$-stable subordinator, with $\alpha \in (0,1)$.
By considering two times $s$ and $t$, such that $s<t$, and letting $t\to \infty$,  they show that the auto-correlation $\rho (X_{L(t)}, X_{L(s)})$ behaves like $t^{-\alpha}$ if $\mathbb{E}X_1\neq 0$ and $t^{-\frac{\alpha}{2}}$ if $\mathbb{E}X_1= 0$.  It is interesting to note that the same power law behavior is observed in the corresponding discrete-time models (see Proposition 4 in \cite{pachon}).
\end{os}

\end{document}